\numberwithin{equation}{section}
\let\Im=\undefined\DeclareMathOperator*{\Im}{Im}
\newcommand{\R}{\mathbb{R}}
\newcommand{\eps}{\varepsilon}
\newcommand{\F}{\mathcal{F}}
\newtheorem{theorem}{Theorem}[section]
\newtheorem*{prop}{Proposition}
\newtheorem{lemma}[theorem]{Lemma}
\newtheorem{proposition}[theorem]{Proposition}
\theoremstyle{definition}
\newtheorem{remark}[theorem]{Remark}
\theoremstyle{remark}
\newcommand{\qtq}[1]{\quad\text{#1}\quad}
\begin{document}

\title[Inhomogeneous NLS]{Scattering for the non-radial inhomogeneous NLS}

\author[C. Miao]{Changxing Miao}
\address{Institute for Applied Physics and Computational Mathematics, Beijing, China}
\email{miao\_changxing@iapcm.ac.cn}

\author[J. Murphy]{Jason Murphy}
\address{Missouri University of Science and Technology, Rolla, MO, USA}
\email{jason.murphy@mst.edu}

\author[J. Zheng]{Jiqiang Zheng}
\address{Institute for Applied Physics and Computational Mathematics, Beijing, China}
\email{zhengjiqiang@gmail.com}

\begin{abstract} We extend the result of Farah and Guzm\'an  \cite{FG} on scattering for the $3d$ cubic inhomogeneous NLS to the non-radial setting.  The key new ingredient is a construction of scattering solutions corresponding to initial data living far from the origin. 
\end{abstract}

\maketitle

\section{Introduction}  

We consider the $3d$ focusing cubic inhomogeneous nonlinear Schr\"odinger equation:
\begin{equation}\label{inls}
\begin{cases}
(i\partial_t + \Delta) u + |x|^{-b} |u|^2 u=0 , \\
u|_{t=0}=u_0\in H^1(\R^3),
\end{cases}
\end{equation}
where $0<b<\tfrac12$.  This is an $\dot H^{s_c}$-critical problem, where $s_c=\tfrac{1+b}{2}\in(\tfrac12,\tfrac34)$. In \cite{FG}, the authors established a scattering result for initial data below the ground state threshold.  In particular, denoting by $Q$ the ground state solution to the equation
\[
\Delta Q - Q + |x|^{-b} Q^3 = 0
\]
and the conserved mass and energy of solutions by
\[
M[u]=\int |u|^2\,dx,\quad E[u] = \int \{\tfrac12 |\nabla u|^2 -\tfrac14 |x|^{-b} {|u|^4}\}\,dx,
\]
the authors of \cite{FG} proved the following:

\begin{theorem}\label{T:FG} Suppose $u_0\in H^1$ is radial and satisfies 
\begin{equation}\label{below1}
E[u_0]^{s_c} M[u_0]^{1-s_c} < E[Q]^{s_c} M[Q]^{1-s_c} 
\end{equation}
and
\begin{equation}\label{below2}
 \| \nabla u_0\|_{L^2}^{s_c}\|u_0\|_{L^2}^{1-s_c} < \| \nabla Q\|_{L^2}^{s_c}\|Q\|_{L^2}^{1-s_c}. 
\end{equation}
Then the corresponding solution $u(t)$ to \eqref{inls} is global in time and scatters, that is, there exist $u_\pm\in H^1$ so that
\[
\lim_{t\to\pm\infty}\|u(t) - e^{it\Delta}u_\pm\|_{H^1} = 0.
\]
\end{theorem}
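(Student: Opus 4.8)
The plan is to adapt the concentration-compactness/rigidity approach of Kenig--Merle to the inhomogeneous setting, closely following \cite{FG} for the radial reduction but handling the loss of symmetry in the compactness argument.

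\medskip

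\noindent\textbf{Step 1: Variational and local theory.}
First I would recall the variational characterization of $Q$: under the subthreshold conditions \eqref{below1}--\eqref{below2}, a standard continuity-in-time argument using the sharp Gagliardo--Nirenberg inequality adapted to the weight $|x|^{-b}$ (as in \cite{FG}) shows that $\|\nabla u(t)\|_{L^2}^{s_c}\|u(t)\|_{L^2}^{1-s_c}$ stays uniformly below the ground-state value for all times in the interval of existence, giving an a priori bound on $\|u(t)\|_{H^1}$ and hence global existence. Scattering is then equivalent to a global spacetime bound $\|u\|_{L_t^q L_x^r} \lesssim 1$ in an appropriate Strichartz-type norm (accounting for the fact that the problem is $\dot H^{s_c}$-critical, so one uses $\dot H^{s_c}$-admissible exponents together with the $L_t^\infty H^1$ control). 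The local well-posedness, stability/perturbation lemma, and small-data scattering in these spaces are established exactly as in \cite{FG}, using the Strichartz estimates together with the fractional chain/product rule and the observation that $|x|^{-b}$ is locally $L^{p}$ for suitable $p$ when $0<b<\tfrac12$.

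\medskip

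\noindent\textbf{Step 2: Construction of a critical element.}
Assuming scattering fails, the negation of the global spacetime bound produces a sequence of solutions with diverging spacetime norms but bounded (subthreshold) mass-energy. Applying a linear profile decomposition for $e^{it\Delta}$ in $H^1(\R^3)$ and the nonlinear profile/stability machinery, one extracts — in the usual way — a minimal blowup solution $u_c$ whose orbit $\{u_c(t)\}$ is precompact in $H^1$ modulo the symmetries of the equation. The crucial point, and the place where non-radiality bites, is that the inhomogeneous equation \eqref{inls} is \emph{not} translation invariant (the weight $|x|^{-b}$ breaks it), so the only surviving symmetry is time translation; there is no spatial translation parameter to quotient by. Consequently, for profiles in the decomposition whose spatial centers $x_n$ escape to infinity, $|x_n|\to\infty$, one cannot simply translate back to the origin — one must instead show directly that such profiles generate genuine (scattering) nonlinear solutions. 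This is the ``key new ingredient'' flagged in the abstract: one compares the nonlinear evolution of data centered far from the origin to the evolution under the \emph{translation-invariant} limiting equation, where $|x|^{-b}$ is effectively replaced by a small constant $|x_n|^{-b}\to 0$ (a mass-subcritical-type perturbation), and runs a long-time perturbation argument to conclude these profiles scatter with bounded norm. Hence in the end the critical element $u_c$ has a frequency scale and a spatial center that remain \emph{bounded}, i.e. the orbit is precompact in $H^1(\R^3)$ without any modulation at all.

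\medskip

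\noindent\textbf{Step 3: Rigidity.}
With a genuinely precompact subthreshold critical element $u_c$ in hand, I would run a virial/Morawetz rigidity argument to force $u_c\equiv 0$, contradicting that its spacetime norm is infinite. Because the orbit is precompact in $H^1$ (localized in space near the origin, thanks to Step 2), the localized virial identity $\tfrac{d^2}{dt^2}\int \chi_R |x|^2 |u_c|^2\,dx$ — whose leading term is controlled below by a multiple of the coercive quantity $\|\nabla u_c\|_{L^2}^2 - \tfrac{3+b}{3}\cdot\tfrac34\int |x|^{-b}|u_c|^4\,dx$, which is strictly positive and bounded away from $0$ on the subthreshold set — together with compactness to absorb the error terms from the cutoff, yields $\int_0^T (\text{positive const})\,dt \lesssim \sup_t |\text{boundary terms}| \lesssim R$, and letting $T\to\infty$ gives a contradiction. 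Here the favorable sign of $b>0$ in the virial computation is exactly what makes the inhomogeneous term cooperate. I expect Step 2 — specifically, the nonlinear profile construction for data with $|x_n|\to\infty$ and the associated long-time perturbation estimate — to be the main obstacle, since it has no analogue in the radial argument of \cite{FG} and requires carefully tracking how the spatial decay of $|x|^{-b}$ interacts with the dispersive decay of $e^{it\Delta}$.
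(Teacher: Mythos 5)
Your proposal is correct and follows essentially the same route as the paper (and \cite{FG}): variational coercivity and local/stability theory, a Kenig--Merle concentration-compactness construction of a precompact sub-threshold critical element, and a localized virial rigidity argument, with your treatment of profiles whose centers satisfy $|x_n|\to\infty$ (approximating by the effectively translation-invariant dynamics with negligible nonlinear coefficient and concluding by long-time perturbation) being exactly the paper's key new ingredient, Proposition~\ref{P}. Note only that for the radial statement as quoted this far-field step is not actually needed, since radial symmetry forces the translation parameters to vanish; it is precisely what the paper adds in order to remove the radial assumption.
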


The purpose of this note is to extend Theorem~\ref{T:FG} to the non-radial setting:

\begin{theorem}\label{T} Theorem~\ref{T:FG} holds without the radial restriction on $u_0$.
\end{theorem}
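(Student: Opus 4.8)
The plan is to run the Kenig--Merle concentration--compactness--rigidity scheme in the energy space $H^1$, exactly as in \cite{FG}, isolating the three places where radiality is used and replacing them with non-radial arguments; the new input is confined essentially to one of them. First, the ingredients insensitive to symmetry. The sharp Gagliardo--Nirenberg-type inequality whose optimizer is $Q$, the resulting variational dichotomy and coercivity below the threshold \eqref{below1}--\eqref{below2}, together with conservation of mass and energy, give a uniform bound $\sup_t\norm{\nabla u(t)}{}\lesssim 1$ in $L^2$ and hence global existence for every (not necessarily radial) below-threshold datum. Likewise the small-data theory, the long-time perturbation (stability) lemma for \eqref{inls}, and the equivalence ``scattering $\iff$ finite global Strichartz norm'' are proved purely from Strichartz estimates together with the bound $b<\tfrac12$, which is precisely what is needed to absorb the weight $|x|^{-b}$ via the splitting $|x|^{-b}\le|x|^{-b}\chi_{\{|x|\le1\}}+\chi_{\{|x|>1\}}$ and Sobolev/Hardy, without the radial Sobolev embedding. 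I would collect these in a preliminary section, citing \cite{FG} and standard references.

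Second, the profile decomposition. Since \eqref{inls} is energy-subcritical and is treated in $H^1$ rather than at the critical regularity $\dot H^{s_c}$, the relevant linear profile decomposition for an $H^1$-bounded sequence $(\psi_n)$ carries only space- and time-translation parameters: $\psi_n=\sum_{j<J}e^{it_n^j\Delta}[\phi^j(\cdot-x_n^j)]+r_n^J$, with asymptotic orthogonality of the $(t_n^j,x_n^j)$ and $\limsup_n\norm{e^{it\Delta}r_n^J}{}$ small in the scattering norm as $J\to\infty$. To each profile one attaches a nonlinear profile. If $x_n^j$ is bounded, pass to a subsequence with $x_n^j\to y^j$, replace $\phi^j(\cdot-x_n^j)$ by the fixed translate $\phi^j(\cdot-y^j)$ up to an $H^1$-error $o_n(1)$, and take the nonlinear profile to be the solution of \eqref{inls} with this data (if $t_n^j\to\pm\infty$, its wave-operator modification, which exists because $e^{it\Delta}\phi^j(\cdot-y^j)$ has small scattering norm near $\mp\infty$); when the profile carries strictly less than the critical mass-energy it scatters by the induction hypothesis. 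If $|x_n^j|\to\infty$, the nonlinear profile is furnished by the construction of the next paragraph. One then forms the approximate solution $\sum_{j<J}v_n^j+e^{it\Delta}r_n^J$, estimates the error using orthogonality and the weight bound, applies the stability lemma, and runs the standard induction on mass-energy to produce, at the critical threshold, a single non-scattering critical element $u_c$.

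Third --- and this is the only genuinely new part --- the construction of scattering solutions for data concentrated far from the origin. Suppose $|x_n|\to\infty$ and let $u_n$ solve \eqref{inls} with data $\phi(\cdot-x_n)$ (the case $t_n\to\pm\infty$ is analogous, if anything easier). Writing $w_n(t,x)=u_n(t,x+x_n)$, one has $i\partial_t w_n+\Delta w_n+|x+x_n|^{-b}|w_n|^2w_n=0$, $w_n(0)=\phi$, and since $|x+x_n|^{-b}\to0$ uniformly on compact sets, $w_n$ should converge to the free solution $e^{it\Delta}\phi$, which is global with finite scattering norm. I would make this rigorous by treating $e^{it\Delta}\phi$ as an approximate solution of the $w_n$-equation: split the weight as $|x+x_n|^{-b}\chi_{\{|x|\le|x_n|/2\}}+|x+x_n|^{-b}\chi_{\{|x|>|x_n|/2\}}$; on the first region the weight is $\le(|x_n|/2)^{-b}\to0$, a genuinely small coupling constant; on the second the weight may be large near $x=-x_n$, but the Strichartz norms of $e^{it\Delta}\phi$ over $\{|x|>|x_n|/2\}$ tend to $0$ by dominated convergence, while the split $|x+x_n|^{-b}\le|x+x_n|^{-b}\chi_{\{|x+x_n|\le1\}}+\chi_{\{|x+x_n|>1\}}$ and $b<\tfrac12$ keep all norms finite. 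The error of $e^{it\Delta}\phi$ as an approximate solution is then $o_n(1)$ in the dual Strichartz space, so the stability lemma produces a genuine global $w_n$ with $\norm{w_n}{}\lesssim\norm{\phi}{H^1}$ in the scattering norm uniformly in $n$ and $\norm{w_n-e^{it\Delta}\phi}{}\to0$; undoing the translation gives the desired scattering nonlinear profile, with scattering size summable over the profile index. The same mechanism shows that the spatial center of $u_c$ cannot escape to infinity: in any profile decomposition of $u_c(t_n)$ all profiles have bounded spatial parameters (an unbounded one would scatter, hence cannot support a critical element), there is exactly one profile, and the remainder tends to $0$ in $H^1$ by coercivity, so $u_c(t_n)$ converges in $H^1$ along a subsequence. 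Thus $\{u_c(t):t\in\R\}$ is precompact in $H^1$ with no symmetry parameter, and hence its mass stays uniformly inside a fixed ball. I expect this step to be the main obstacle.

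Fourth, rigidity. With $\{u_c(t)\}$ precompact in $H^1$ and its mass concentrated near the origin, the truncated virial functional $t\mapsto\int\nabla\varphi_R\cdot\Im(\bar u_c\nabla u_c)\,dx$, with $\varphi_R(x)=R^2\varphi(x/R)$ and $\varphi(x)=|x|^2$ near $0$, has derivative equal to a fixed multiple of a variationally coercive quantity --- bounded below by $\delta\norm{\nabla u_c(t)}{L^2}^2$ for some $\delta>0$ coming from \eqref{below1}--\eqref{below2}, the term from differentiating $|x|^{-b}$ carrying a favorable sign --- plus truncation errors $o_R(1)$ uniformly in $t$ by compactness. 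Since the functional is bounded in $R$ and $t$, integrating over $[0,T]$, letting $T\to\infty$ and then $R\to\infty$ forces $\norm{\nabla u_c}{L^2}\equiv0$, hence $u_c\equiv0$, contradicting $E[u_c]>0$. This is the computation of \cite{FG}, with the radial cutoff replaced by the observation, from the third step, that no truncation center other than the origin is needed. Combining all four steps yields Theorem~\ref{T}.
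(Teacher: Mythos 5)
Your overall scheme is the same as the one used here: follow the Kenig--Merle roadmap exactly as in \cite{FG}, observe that radiality enters only through the spatial translation parameters in the profile decomposition, and supply the single missing ingredient, namely scattering solutions of \eqref{inls} attached to profiles with $|x_n|\to\infty$ (our Proposition~\ref{P}), after which the compactness, construction of the critical element, and the localized virial rigidity go through verbatim. Where you genuinely differ is in how you prove that ingredient. We build the approximate solution by spatially truncating the free evolution near the origin (in the original frame), which forces two further devices: the cutoff generates \emph{linear} error terms \eqref{E2}--\eqref{E3} whose time integrals grow, so the cutoff is only used on $|t|\leq T$ and spliced to the free flow for $|t|>T$; and the term $\nabla(\nabla\chi_n\cdot\nabla e^{it\Delta}\phi)$ costs two derivatives of $\phi$, forcing the frequency projection $P_{\leq |x_n|^\theta}$ (see Remark~\ref{Explain}). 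You instead take the plain recentered free evolution $e^{it\Delta}\phi$ as the approximate solution, so the only error is the cubic term itself, and you extract smallness by splitting space at $|x|\sim|x_n|/2$: a genuinely small coupling $(|x_n|/2)^{-b}$ on the near region, and vanishing of the restricted Strichartz tails of $e^{it\Delta}\phi$ on the far region. This is a legitimate and arguably cleaner route: since no cutoff is present, $\nabla e_n$ involves at most one derivative of $\phi$, so neither the frequency truncation nor the time-splicing is needed, and the initial data are matched exactly rather than up to $o(1)$. The price is that in the far region the weight is unbounded near $x=-x_n$, so you must run the H\"older estimates with $|x+x_n|^{-b}\in L^{3/b,\infty}$ (or your unit-ball splitting plus Hardy centered at $-x_n$ for the term where the gradient hits the weight) and verify that the restricted Lorentz--Strichartz norms of the free evolution over $\{|x|>|x_n|/2\}$ vanish; this holds by dominated convergence because the second Lorentz index is finite, and is the spatial analogue of the monotone-convergence-in-time step we use for $|t|>T$. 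Our pointwise bound $|x|^{-b}\lesssim|x_n|^{-b}$ on the support of the cutoff gives an explicit rate, whereas your argument gives only qualitative smallness, but qualitative smallness is all that Lemma~\ref{L:Stab} requires, so your construction suffices for Theorem~\ref{T}.
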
 

In fact, as we will describe below, with the addition of one key ingredient the arguments of \cite{FG} are already sufficient to obtain the non-radial result.  The missing ingredient in \cite{FG} is a method for producing scattering solutions to \eqref{inls} corresponding to initial conditions living far from the origin.  We will prove such a result in Proposition~\ref{P}.  In order to prove Theorem~\ref{T}, we will then simply walk through the steps carried out in \cite{FG} and demonstrate how the addition of Proposition~\ref{P} allows for the inclusion of non-radial initial conditions.  

We would also like to point out that in \cite{FG2}, the authors extended the result of \cite{FG} to higher dimensions.  The methods we present should suffice to extend the results of \cite{FG2} to the non-radial case, as well.  We have opted to focus on the $3d$ cubic case to keep technical complications to a minimum.

Before proceeding to the proof, let us briefly discuss some background related to the inhomogeneous NLS, as well as some works that capitalize on results similar to Proposition~\ref{P}.

The model \eqref{inls}, along with some generalizations, has been the subject of recent mathematical interest; see, e.g.  \cite{GS1, GS2, GS3, FG, FG2, Dinh, Guzman}.  The specific result of \cite{FG} under discussion in this paper fits in the context of many recent results concerning sharp scattering thresholds (typically described in terms of a ground state solution) for focusing intercritical nonlinear Schr\"odinger equations.  Such results were first established for the standard power-type NLS (see \cite{DHR, HR, Guevara, CFX, AkahoriNawa, DM1, DM2, ADM}), although many extensions to related models are now available (see e.g. \cite{KMVZ, FG, Zheng, LMM, SWYZ, XZ, Arora, KVZ}).  Many of the works just cited, including the work of \cite{FG}, follow the `Kenig--Merle roadmap' of \cite{KM}, reducing the problem of scattering for arbitrary sub-threshold solutions to the preclusion of compact sub-threshold solutions. The reduction is carried out using concentration-compactness arguments, while the preclusion is typically achieved through virial arguments.  Beginning with the work of \cite{DM1}, there has also been a trend towards establishing sharp scattering results (for NLS and related models) using technically simpler arguments that avoid concentration compactness entirely.  This typically requires a radial assumption on the initial data, which essentially provides enough compactness (via tools like radial Sobolev embedding) to run a virial argument for general sub-threshold solutions (see also \cite{DM2} for a non-radial result).  

Our interest in this paper is to employ some ideas coming from the study of dispersive equations with broken symmetries to obtain the non-radial analogue of the result of \cite{FG}.  Particularly relevant are the works \cite{LMM, KMVZ, KMVZZ2}, which consider the scattering problem for NLS with an inverse-square potential (i.e power-type NLS with $-\Delta$ replaced by $-\Delta+a|x|^{-2}$), and also proceed along the `Kenig--Merle roadmap'.  This model shares some similarities with the inhomogeneous NLS, in the sense that it retains a scaling symmetry but has a broken space translation symmetry. 

In \cite{LMM, KMVZ, KMVZZ2}, a key challenge arising from the broken translation symmetry appears in the construction of compact blowup solutions.  As we will discuss in Section~\ref{S:Proof} below, this construction relies first on a linear profile decomposition for a sequence of initial data, and then subsequently on a `nonlinear profile decomposition' obtained by constructing (scattering) nonlinear solutions associated to each profile.  The difficulty arises from the fact that each profile comes with some translation parameters $x_n$, which will either vanish identically or satisfy $|x_n|\to\infty$.  In particular, since the translation symmetry is broken, one cannot construct solutions for profiles with $|x_n|\to\infty$ by simply solving the equation with data given by the profile and then incorporating the translation. The resolution in \cite{LMM, KMVZ, KMVZZ2} comes from the observation that in the regime $|x|\to \infty$, the effect of the potential $a|x|^{-2}$ becomes weak.  Therefore, one can construct an approximate solution to the full problem by using a solution to the standard NLS (i.e. with no potential), and then appealing to a stability result to produce the true desired solution.  In this sense, one finds the standard NLS `embedded' inside the model of the NLS with inverse-square potential in the regime $|x|\to\infty$. 

In \cite{FG}, the authors' restriction to radial initial data for \eqref{inls} means that the translation parameters vanish from the problem entirely, i.e. one can always take $x_n\equiv 0$.  In particular, this allows them to avoid the issue described above entirely.  In fact, a careful study of \cite{FG} reveals that this is the \emph{only} job of the radial assumption in that work (i.e. there is no use of radial Sobolev embedding, radial Strichartz estimates, or any other radial tools).  The key observation in the present paper is that one can remove the radial assumption provided one can exclude the possibility of $|x_n|\to\infty$ by some other means.  In particular, this can be achieved provided we can produce scattering solutions associated to any profile with diverging translation parameters. We achieve this in Proposition~\ref{P}.  To produce these scattering solutions, we use the same philosophy as described above.  This time, however, we observe that in the regime $|x|\to\infty$, the nonlinearity itself becomes weak, and hence solutions to \eqref{inls} should simply be approximated by solutions to the underlying linear Schr\"odinger equation.  Put differently, we find the underlying linear equation `embedded' inside \eqref{inls} in the regime $|x|\to\infty$.  For a more detailed explanation of the exact approximation we use, see Remark~\ref{Explain}.  

In Section~\ref{S:Proof} we will discuss how, once we have incorporated Proposition~\ref{P}, we can follow the rest of the arguments of \cite{FG} more or less verbatim to deduce the sub-threshold scattering theorem for arbitrary (i.e. non-radial) initial data.  We then carry out the proof of Proposition~\ref{P} in Section~\ref{S:P}. 

We would like to point out that the works \cite{LMM, KMVZ, KMVZZ2} are certainly not the first works to capitalize on the ideas just discussed.  In general, in the setting of dispersive equations with broken symmetries, one needs to understand the models that may be `embedded' in the full equation in various limiting scenarios.  We refer the reader to the following list of references, which is certainly not exhaustive, but hopefully serves to demonstrate the importance and flexibility of these ideas: \cite{KMVZ, LMM, KVZ, KVZ0, KKSV, KSV, IP1, IP2, IPS, Jao1, Jao2, KMVZZ2, KOPV, PTW, KMV}.

The rest of this paper is organized as follows:  In Section~\ref{S:notation}, we set up notation and collect a few preliminary results. In Section~\ref{S:Proof}, we present the proof of Theorem~\ref{T}, taking the main new ingredient Proposition~\ref{P} for granted.  Finally, in Section~\ref{S:P}, we prove Proposition~\ref{P}.

\subsection*{Acknowledgements} C.M. was supported by NFSC Grants 11771041 and 11831004.  J.M. was supported by a Simons Collaboration Grant.  J.Z. was supported by NSF Grant 11901041. 

%%%%

\section{Notation and preliminaries}\label{S:notation}

We write $A\lesssim B$ to denote $A\leq CB$ for some $C>0$.  We also make use of the notation $a\pm$ to denote $a\pm\eps$ for some sufficiently small $\eps>0$.  We use the standard notation for Lebesgue space-time norms and Sobolev norms, e.g. $L_t^q L_x^r$ and $L_t^\infty H_x^1$. 

We employ the standard Littlewood--Paley projections $P_{\leq N}$.  These are defined as Fourier multipliers, with the multiplier corresponding to a smooth cutoff to the region $\{|\xi|\leq N\}$.  We need  only a few basic facts, e.g. the Bernstein estimate
\[
\| |\nabla|^s P_{\leq N} f\|_{L_x^2} \leq N^s \|f\|_{L_x^2} 
\]
and the fact that $P_{\leq N} f \to f$ strongly in $H^1$ as $N\to\infty$. 

To deal with the function $|x|^{-b}$ appearing in the nonlinearity, we have found it convenient to utilize Lorentz spaces, defined via the quasi-norms
\[
\|f\|_{L_x^{p,q}} = \bigl\| \lambda \bigl| \{x: |f(x)|>\lambda \} \bigr|^{\frac{1}{p}} \bigr\|_{L^q((0,\infty),\frac{d\lambda}{\lambda})} 
\]
for $1\leq p<\infty$ and $1\leq q\leq\infty$.  In particular $L^{p,p}=L^p$, while $L^{p,\infty}$ corresponds to the weak $L^p$ space.  In general we have the embedding $L^{p,q}\hookrightarrow L^{p,q'}$ for $q<q'$.  These spaces are natural in the context of \eqref{inls} since $|x|^{-b}\in L^{\frac{3}{b},\infty}(\R^3)$.

Many standard functional inequalities have analogues in Lorentz space (see e.g. \cite{Hunt, ONeil}).  For example, we have the H\"older inequality
\[
\|fg\|_{L^{p,q}} \lesssim \|f\|_{L^{p_1,q_1}}\|g\|_{L^{p_2,q_2}}
\]
for $1\leq p,p_1,p_2<\infty$ and $1\leq q,q_1,q_2\leq\infty$ satisfying $\tfrac{1}{p}=\tfrac{1}{p_1}+\tfrac{1}{p_2}$ and $\tfrac{1}{q}=\tfrac{1}{q_1}+\tfrac{1}{q_2}$.  We also have Young's convolution inequality
\[
\|f\ast g\|_{L^{p,q}} \lesssim \|f\|_{L^{p_1,q_1}}\|g\|_{L^{p_2,q_2}}
\] 
for the same range of exponents now satisfying $\tfrac{1}{p}+1=\tfrac{1}{p_1}+\tfrac{1}{p_2}$ and $\tfrac{1}{q}=\tfrac{1}{q_1}+\tfrac{1}{q_2}$. 

Using Young's inequality for Lorentz spaces, we can also establish a Lorentz-space version  of Sobolev embedding, which will be useful below. Indeed, writing $d\geq 1$ for the spatial dimension and recalling $\F[|x|^{s-d}]=c|\xi|^{-s}$ for $0<s<d$ (see e.g. \cite{Stein}), we have the estimate
\[
\| |\nabla|^{-s} f\|_{L^{r,q}} \sim \| |x|^{s-d}\ast f\|_{L^{r,q}} \lesssim \| |x|^{s-d}\|_{L^{\frac{d}{d-s},\infty}} \| f\|_{L^{p,q}}\lesssim \|f\|_{L^{p,q}}
\]
for $1<r,p<\infty$ satisfying $\tfrac{d}{r}=\tfrac{d}{p}-s$ and $1\leq q\leq\infty$. % This extends the standard Sobolev embedding inequalities to the setting of Lorentz spaces. Similarly, to prove a Lorentz-space version of Hardy's inequality, we write $f=c |x|^{s-d}\ast |\nabla|^s f$ and estimate
%\begin{align*}
%\| |x|^{-s} f\|_{L^{p,q}} & \lesssim \| |x|^{-s} \|_{L^{\frac{d}{s},\infty}} \| |x|^{s-d}\ast |\nabla|^s f\|_{L^{\frac{dp}{d-sp},q}} \\
%& \lesssim \| |x|^{s-d}\|_{L^{\frac{d}{d-s},\infty}} \| |\nabla|^s f\|_{L^{p,q}} \lesssim \||\nabla|^s f\|_{L^{p,q}}.
%\end{align*}
%provided $1<p<\infty$, $0<s<\tfrac{d}{p}$, and $1\leq q\leq\infty$. 

In \cite{FG}, Strichartz estimates for the linear Schr\"odinger equation are stated using the following notation.  One defines the region $\mathcal{A}_s$ (for $s\in\R$) to be the set of $(q,r)$ satisfying
\[
(\tfrac{6}{3-2s})^+ \leq r\leq 6^- \qtq{and}\tfrac{2}{q}+\tfrac{3}{r}=\tfrac{3}{2}-s. 
\]
When $s=0$, the endpoints are included.  The Strichartz norm is then defined by
\[
\|u\|_{S(\dot H^s)} = \sup_{(q,r)\in \mathcal{A}_s} \|u\|_{L_t^q L_x^r},
\]
with dual Strichartz norm given by
\[
\|F\|_{S'(\dot H^{-s})} = \inf_{(q,r)\in \mathcal{A}_{-s}} \|F\|_{L_t^{q'}L_x^{r'}}. 
\]
If no time interval is indicated, this refers to space-time norms over all of $\R\times\R^3$.  To denote the truncation to a finite time interval $I$, one writes $S(\dot{H}^s;I)$. 

One can characterize scattering versus `blowup' for \eqref{inls} according to the $S(\dot H^{s_c})$ norm (see e.g. \cite[Proposition~{3.1}]{FG}).  In particular, solutions may be extended as long as their $S(\dot H^{s_c})$ norm remains finite, and a global solution with finite $S(\dot H^{s_c})$-norm scatters to a free solution.  On the other hand, by `blowup' we typically refer to the blowup of the $S(\dot H^{s_c})$ norm, which may occur in finite or infinite time. 

We will utilize the following Strichartz estimates for the linear Schr\"odinger equation:
\begin{lemma}[Strichartz estimates] Let $e^{it\Delta}$ denote the free Schr\"odinger propagator.  Then
\[
\|e^{it\Delta} f\|_{S(\dot H^{s})}\lesssim \|f\|_{\dot H^s}. 
\] 
\end{lemma}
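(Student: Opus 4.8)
The plan is to reduce this $\dot H^s$-level estimate to the classical $L^2$-based Strichartz inequality by means of Sobolev embedding, handling one admissible pair at a time and then taking the supremum. The input is the Keel--Tao estimate
\[
\|e^{it\Delta}g\|_{L_t^q L_x^{\tilde r}}\lesssim\|g\|_{L_x^2},
\]
valid in three dimensions for every Schr\"odinger-admissible pair $(q,\tilde r)$, i.e. $2\leq q,\tilde r\leq\infty$ with $\tfrac2q+\tfrac3{\tilde r}=\tfrac32$ (the endpoint $(2,6)$ being admissible since $d=3$), with an implicit constant uniform over all such pairs. It suffices to treat $s\geq 0$, which is the range relevant below.

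Fix $(q,r)\in\mathcal{A}_s$ and define $\tilde r$ by $\tfrac1{\tilde r}=\tfrac1r+\tfrac s3$. Since $(q,r)$ obeys $\tfrac2q+\tfrac3r=\tfrac32-s$, this choice makes $(q,\tilde r)$ Schr\"odinger-admissible, and the constraints defining $\mathcal{A}_s$ guarantee $1<\tilde r\leq r<\infty$ with $\tilde r\in[2,6]$; in fact, when $s>0$ the strict inequality $r\leq 6^-$ keeps $\tilde r$ bounded away from $6$, so the endpoint $L^2$-Strichartz estimate is only invoked when $s=0$. Writing $f=|\nabla|^{-s}g$ with $g=|\nabla|^s f$, so that $\|g\|_{L_x^2}=\|f\|_{\dot H^s}$, and using that $|\nabla|^{-s}$ commutes with $e^{it\Delta}$ (both being Fourier multipliers), I would apply the homogeneous Sobolev embedding $\||\nabla|^{-s}h\|_{L_x^r}\lesssim\|h\|_{L_x^{\tilde r}}$ in the spatial variable at a.e. fixed time and then invoke the displayed $L^2$-Strichartz estimate:
\[
\|e^{it\Delta}f\|_{L_t^q L_x^r}=\||\nabla|^{-s}e^{it\Delta}g\|_{L_t^q L_x^r}\lesssim\|e^{it\Delta}g\|_{L_t^q L_x^{\tilde r}}\lesssim\|g\|_{L_x^2}=\|f\|_{\dot H^s}.
\]
Taking the supremum over $(q,r)\in\mathcal{A}_s$ then yields the lemma; the implicit constant stays uniform because the $L^2$-Strichartz constant is uniform over admissible pairs in three dimensions and the Sobolev constant is bounded as $\tilde r$ ranges over the compact interval $[2,6]$.

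I do not expect any real obstacle here: this is a routine reduction, and the same estimate is already recorded in \cite{FG}. The only steps that warrant any care are checking that the companion pair $(q,\tilde r)$ genuinely lands in the admissible range with no forbidden endpoint (automatic for $d=3$) and that the Sobolev embedding is applied with legitimate exponents $1<\tilde r\leq r<\infty$ and $0\leq s<3$; both follow immediately from the definition of $\mathcal{A}_s$ and the range of $s$ at hand. Should one prefer, the lemma may simply be cited from the references above.
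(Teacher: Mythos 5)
The paper does not prove this lemma at all: it is stated as a standard fact (the Strichartz estimates in the $S(\dot H^s)$ framework are quoted from \cite{FG}, where they are in turn derived from the classical $L^2$-based estimates), so there is no in-paper argument to compare against. Your proposal is the standard and correct derivation: for $(q,r)\in\mathcal{A}_s$ the companion exponent $\tfrac1{\tilde r}=\tfrac1r+\tfrac s3$ does satisfy $\tfrac2q+\tfrac3{\tilde r}=\tfrac32$ with $2\le\tilde r<6$ (the lower bound $r\ge(\tfrac{6}{3-2s})^+$ gives $\tilde r\le 2$ is false --- rather it gives $\tilde r\ge 2$ --- and $r\le 6^-$ together with $s>0$ keeps $\tilde r$ strictly below $6$), the commutation of $|\nabla|^{-s}$ with $e^{it\Delta}$ and the fixed-time Sobolev embedding $\||\nabla|^{-s}h\|_{L^r_x}\lesssim\|h\|_{L^{\tilde r}_x}$ are legitimate for $1<\tilde r\le r<\infty$, $0\le s<3$, and the Keel--Tao bound with constant uniform over admissible pairs closes the estimate after taking the supremum over $\mathcal{A}_s$. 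The only caveat worth recording is that your uniformity claim for the Sobolev constant relies on $r$ staying away from the endpoints, which is exactly what the $(\cdot)^+$ and $(\cdot)^-$ modifications in the definition of $\mathcal{A}_s$ guarantee for $s>0$ (and for $s=0$ no embedding is needed); with that observed, the argument is complete and matches what the cited references do.
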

While inhomogeneous estimates hold as well (and are essential for the well-posedness and stability theory developed in \cite{FG}), in this note we will only need to make explicit use the homogeneous estimates stated above. 

We will rely fundamentally upon the following stability result, which apppears as Proposition~4.10 in \cite{FG} and was already essential in that work. 

\begin{lemma}[Stability]\label{L:Stab} Suppose $I$ is a time interval and $\tilde v$ is an approximate solution to \eqref{inls} on $I$, in the sense that
\[
i\partial_t\tilde v + \Delta \tilde v + |x|^{-b} |\tilde v|^2 \tilde v = e 
\]
for some function $e$ on $I$. Suppose that $\tilde u$ satisfies
\[
\|\tilde v\|_{L_t^\infty H_x^1(I\times\R^3)} + \|\tilde v\|_{S(\dot H^{s_c};I)} \leq C<\infty. 
\]
There exists $\eps_1=\eps_1(C)$ sufficiently small that if $u_0\in H^1$ satisfies
\[
\|u_0-\tilde v(0)\|_{H^1} < \eps
\]
and
\[
\|e\|_{S'(L^2;I)}+\|\nabla e\|_{S'(L^2;I)}+\|e\|_{S'(\dot H^{-s_c};I)} <\eps
\]
for some $0<\eps<\eps_1,$ then there exists a unique solution $u$ to \eqref{inls} on $I$ with $u(0)=u_0$, which satisfies
\[
\|u-\tilde v\|_{S(\dot H^{s_c};I)} \lesssim_C \eps
\]
and
\[
\|u\|_{S(\dot H^{s_c};I)} + \|u\|_{S(L^2;I)}+\|\nabla u\|_{S(L^2;I)} \lesssim_C 1. 
\]
\end{lemma}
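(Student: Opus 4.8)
The plan is to run the standard perturbative bootstrap argument for an $\dot H^{s_c}$-critical Schr\"odinger equation, the only extra care being the treatment of the singular weight $|x|^{-b}$, for which the Lorentz-space tools recorded in Section~\ref{S:notation} are exactly designed. Set $w:=u-\tilde v$; then $w(0)=u_0-\tilde v(0)$ has $H^1$-norm $<\eps$, and $w$ solves
\[
i\partial_t w+\Delta w=-|x|^{-b}\big(|\tilde v+w|^2(\tilde v+w)-|\tilde v|^2\tilde v\big)+e=:G(\tilde v,w)+e\quad\text{on }I,
\]
where $G(\tilde v,w)$ is a finite sum of terms, each a cubic monomial in $\tilde v,w$ and their conjugates (hence each carrying one factor $|x|^{-b}$) containing at least one factor of $w$. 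Writing the Duhamel formula for $w$ based at a time $t_0\in I$, I would seek to control the combined quantity
\[
\|w\|_{X(I)}:=\|w\|_{S(\dot H^{s_c};I)}+\|w\|_{S(L^2;I)}+\|\nabla w\|_{S(L^2;I)},
\]
which in particular dominates $\|w\|_{L_t^\infty H_x^1(I\times\R^3)}$. Since $\|\tilde v\|_{S(\dot H^{s_c};I)}$ is only assumed finite (and $I$ may be unbounded), the first step is to partition $I$ into $J=J(C)$ consecutive subintervals $I_1,\dots,I_J$ on each of which $\|\tilde v\|_{S(\dot H^{s_c};I_j)}\leq\eta$ for a small threshold $\eta=\eta(C)>0$ to be fixed.

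On a single subinterval $I_j=[t_j,t_{j+1}]$, applying the homogeneous and inhomogeneous Strichartz estimates (the latter available as noted after the Strichartz lemma) to the Duhamel representation of $w$ yields a bound of the schematic form
\[
\|w\|_{X(I_j)}\lesssim\|w(t_j)\|_{H^1}+\big\|G(\tilde v,w)\big\|_{\mathcal N(I_j)}+\|e\|_{\mathcal N(I_j)},
\]
where $\mathcal N$ collects the dual Strichartz norms $S'(L^2)$, $S'(\dot H^{-s_c})$, and the $S'(L^2)$-norm of $\nabla(\cdot)$ (so $\|e\|_{\mathcal N(I)}<\eps$ by hypothesis). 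The heart of the matter is the nonlinear estimate on $G(\tilde v,w)$: for each cubic term one places $|x|^{-b}$ into $L^{3/b,\infty}$ and distributes the remaining factors — and, for the $\nabla$ component, the derivative, which must never fall on $|x|^{-b}$ — among dual-admissible Lebesgue/Lorentz exponents, invoking the Lorentz-space H\"older inequality and the Lorentz refinement of Sobolev embedding, exactly as in the contraction argument underlying \cite{FG}. This produces
\[
\big\|G(\tilde v,w)\big\|_{\mathcal N(I_j)}\lesssim\big(\eta+\|w\|_{X(I_j)}\big)^2\,\|w\|_{X(I_j)},
\]
the crucial feature being that the worst term in $\tilde v$ carries the small factor $\eta$. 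A standard continuity/bootstrap argument on the connected quantity $\|w\|_{X(I_j)}$ then shows that, once $\eta$ and $\|w(t_j)\|_{H^1}+\|e\|_{\mathcal N(I_j)}$ are small enough, one has $\|w\|_{X(I_j)}\lesssim\|w(t_j)\|_{H^1}+\|e\|_{\mathcal N(I_j)}$ throughout $I_j$.

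Finally, I would iterate over $j=1,\dots,J$: the $H^1$-norm of $w$ at $t_{j+1}$ is controlled by $\|w\|_{X(I_j)}$, so the admissible smallness at the left endpoint of $I_{j+1}$ degrades by at most a multiplicative factor depending only on $J=J(C)$; choosing $\eps_1=\eps_1(C)$ small enough that the accumulated error stays below the threshold required at every stage, the bootstrap closes on each subinterval, and summing over $j$ gives $\|u-\tilde v\|_{S(\dot H^{s_c};I)}\lesssim_C\eps$ together with $\|u\|_{S(\dot H^{s_c};I)}+\|u\|_{S(L^2;I)}+\|\nabla u\|_{S(L^2;I)}\lesssim_C 1$; global existence of $u$ on $I$ follows from the local theory together with these a priori bounds, and uniqueness from the same difference estimate applied on short intervals. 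I expect the main obstacle to be the nonlinear estimate itself — in particular controlling $\nabla G(\tilde v,w)$ in the dual Strichartz norm in the presence of $|x|^{-b}$, which forces one to keep all derivatives on the Schr\"odinger factors and to exploit the Lorentz-space (rather than merely $L^p$) version of Sobolev embedding in order to close the weighted products; this is precisely the technical content of \cite[Proposition~4.10]{FG}.
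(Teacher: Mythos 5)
The paper does not actually prove this lemma: it is imported verbatim as \cite[Proposition~4.10]{FG}, so the only ``paper proof'' to compare against is the long-time perturbation argument of \cite{FG}. Your sketch is that argument in outline — subdivide $I$ into $J=J(C)$ intervals on which $\|\tilde v\|_{S(\dot H^{s_c};I_j)}\leq\eta(C)$, run Duhamel plus Strichartz for $w=u-\tilde v$, absorb the weight via $|x|^{-b}\in L^{3/b,\infty}$ and Lorentz H\"older/Sobolev, close a continuity bootstrap on each $I_j$, and iterate in $j$ with $\eps_1$ shrunk according to $J$ — so the route is the same, not a different one.

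Two points in the sketch need repair before it is a proof. First, the schematic bound $\|G(\tilde v,w)\|_{\mathcal{N}(I_j)}\lesssim(\eta+\|w\|_{X(I_j)})^2\|w\|_{X(I_j)}$ is not justified by the stated hypotheses once the gradient component of $\mathcal{N}$ is included: terms of $\nabla G$ in which the derivative lands on $\tilde v$ (e.g. $|x|^{-b}\,\tilde v\,\bar w\,\nabla\tilde v$) require space-time Strichartz control of $\nabla\tilde v$, whereas the hypotheses only give $\tilde v\in L_t^\infty H_x^1\cap S(\dot H^{s_c})$. The standard preliminary step, which your outline omits, is to apply Duhamel to $\tilde v$ itself on each $I_j$, using the smallness of $\|\tilde v\|_{S(\dot H^{s_c};I_j)}$ and the hypotheses on $e$, to conclude $\|\tilde v\|_{S(L^2;I_j)}+\|\nabla\tilde v\|_{S(L^2;I_j)}\lesssim_C 1$; the resulting trilinear estimates then carry constants depending on $C$, with smallness coming only from whichever factors are measured in $S(\dot H^{s_c};I_j)$, and this is precisely why $\eta$ (hence $\eps_1$) must be chosen depending on $C$. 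Second, your assertion that the derivative ``must never fall on $|x|^{-b}$'' is not accurate: computing $\nabla G$ inevitably produces $|x|^{-b-1}$ terms, and these are handled by Hardy's inequality, equivalently by placing $|x|^{-b-1}\in L^{\frac{3}{1+b},\infty}$ and using Lorentz H\"older/Sobolev — exactly as this paper does for the analogous terms in the proof of Proposition~\ref{P}. With these two amendments, your argument coincides with the proof of \cite[Proposition~4.10]{FG}.
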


%%%

\section{The proof of Theorem~\ref{T}}\label{S:Proof}

In this section, we review the  proof of Theorem~\ref{T:FG} from \cite{FG}.  As we proceed, we will introduce one new ingredient (Proposition~\ref{P}) into the argument and show how this ingredient allows for the treatment of non-radial initial conditions.  Thus we will be able to conclude that the extension to non-radial solutions (Theorem~\ref{T}) holds as well.  

The proof of Theorem~\ref{T:FG} proceeds by contradiction.  The first main step is to prove that if the theorem fails, one may construct a compact blowup solution living below the ground state threshold. The result may be stated as follows (cf. \cite[Propositions~6.4 and 6.5]{FG}): 

\begin{proposition}[Existence of a critical solution]\label{P:uc} Suppose Theorem~\ref{T} fails.  Then there exists a function $u_{c,0}\in H^1$ such that the corresponding solution $u_c$ to \eqref{inls} is global and uniformly bounded in $H^1$.  This solution is below the ground state threshold (that is, it satisfies \eqref{below1} and \eqref{below2}), blows up in both time directions (that is, $\|u_c\|_{S(\dot H^{s_c};\R_-)}=\|u_c\|_{S(\dot H^{s_c};\R_+)}=\infty$), and has a pre-compact orbit in $H^1$. 
\end{proposition}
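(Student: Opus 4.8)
The plan is to prove Proposition~\ref{P:uc} by the standard concentration-compactness contradiction argument, following the Kenig--Merle roadmap as implemented in \cite{FG}. Assuming Theorem~\ref{T} fails, there is a threshold value
\[
\mathcal{E}_c := \sup\{\, \delta : \text{all solutions with } E[u]^{s_c}M[u]^{1-s_c} < \delta \text{ and } \eqref{below2} \text{ scatter}\,\}
\]
which is strictly below $E[Q]^{s_c}M[Q]^{1-s_c}$. By definition of the supremum one can choose a sequence of solutions $u_n$ to \eqref{inls}, each sub-threshold (satisfying \eqref{below1} and \eqref{below2}), each failing to scatter (so with infinite $S(\dot H^{s_c})$-norm in at least one time direction), and with $E[u_n]^{s_c}M[u_n]^{1-s_c}\to\mathcal{E}_c$. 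After translating in time, one arranges that the $S(\dot H^{s_c})$ norm of $u_n$ is infinite on both $\R_+$ and $\R_-$ (more precisely, that neither half-line norm tends to zero). The goal is to extract from $\{u_n(0)\}$ a subsequence converging in $H^1$ to some $u_{c,0}$, whose solution $u_c$ then inherits sub-threshold status, two-sided blowup, global boundedness in $H^1$ (via the sub-threshold coercivity of the energy, as in \cite{FG}), and $H^1$-precompactness of its orbit (by rerunning the same extraction argument along an arbitrary time sequence).

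The heart of the matter is the extraction of the convergent subsequence, and this is where Proposition~\ref{P} enters. First I would apply a linear profile decomposition to the bounded sequence $\{u_n(0)\}\subset H^1$: up to a subsequence,
\[
u_n(0) = \sum_{j=1}^J e^{it_n^j\Delta}\bigl[\phi^j(\cdot - x_n^j)\bigr] + w_n^J,
\]
with the usual orthogonality of the parameters $(t_n^j, x_n^j)$, asymptotic decoupling of the mass, energy, and $\dot H^{s_c}$-norms, and smallness of the linear evolution of the remainder $w_n^J$ in $S(\dot H^{s_c})$ as $J\to\infty$. For each profile one normalizes so that either $t_n^j\equiv 0$ or $t_n^j\to\pm\infty$, and either $x_n^j\equiv 0$ or $|x_n^j|\to\infty$. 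Next one builds a \emph{nonlinear} profile associated to each $\phi^j$: when $x_n^j\equiv 0$ this is done exactly as in \cite{FG} (solving \eqref{inls} with data $\phi^j$ if $t_n^j\equiv 0$, or matching to a free wave and solving near $t=\pm\infty$ if $t_n^j\to\pm\infty$); when $|x_n^j|\to\infty$, this is precisely the content of Proposition~\ref{P}, which supplies a scattering solution to \eqref{inls} associated to the divergently-translated profile. Combining these nonlinear profiles into an approximate solution and invoking the stability result Lemma~\ref{L:Stab}, one shows that if there were two or more profiles, or a single profile with infinite nonlinear $S(\dot H^{s_c})$-norm but strictly sub-threshold mass-energy, then $u_n$ itself would have finite $S(\dot H^{s_c})$-norm for $n$ large --- contradicting the choice of $u_n$. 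Hence there is exactly one profile, with $w_n^J\to 0$ in $H^1$, $t_n^1\equiv 0$, $x_n^1\equiv 0$, and $E[\phi^1]^{s_c}M[\phi^1]^{1-s_c}=\mathcal{E}_c$; setting $u_{c,0}=\phi^1$ finishes the construction.

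The main obstacle --- and the only genuinely new point relative to \cite{FG} --- is the treatment of profiles with $|x_n^j|\to\infty$, since the broken translation symmetry of \eqref{inls} means one cannot simply translate a fixed nonlinear solution. This is exactly what Proposition~\ref{P} is designed to overcome: in the regime $|x|\to\infty$ the weight $|x|^{-b}$ is small, so the relevant nonlinear solution is well-approximated by the \emph{linear} evolution of the profile, and the true solution is recovered by the stability lemma. Everything else --- the construction of $\mathcal{E}_c$, the time-translation normalization, the decoupling bookkeeping, the global $H^1$-bound from sub-threshold coercivity, and the precompactness argument --- is routine and goes through verbatim as in \cite{FG}, since (as emphasized in the introduction) the radial hypothesis there was used for nothing other than forcing $x_n^j\equiv 0$.
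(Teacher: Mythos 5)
Your proposal is correct and follows essentially the same route as the paper: the Kenig--Merle reduction via the critical threshold, the linear profile decomposition of Proposition~\ref{P:LPD}, nonlinear profiles built as in \cite{FG} when $x_n^j\equiv 0$ and via Proposition~\ref{P} when $|x_n^j|\to\infty$, and the stability result Lemma~\ref{L:Stab} to derive the contradiction forcing a single profile with trivial parameters and vanishing remainder. This is precisely how the paper argues, with the only new ingredient relative to \cite{FG} being the treatment of diverging translation parameters through Proposition~\ref{P}, exactly as you identify.
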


With Proposition~\ref{P:uc} in hand, the authors conclude the proof of Theorem~\ref{T:FG} by carrying out a localized virial argument (see \cite[Theorem~7.3]{FG}).  In the context of \eqref{inls}, the virial identity is the following formula for the time derivative of the weighted momentum for solutions to \eqref{inls}:
\begin{equation}\label{virial}
\tfrac{d}{dt}\Im \int x\cdot \bar u\nabla u \,dx = c\int |\nabla u|^2 - |x|^{-b}|u|^4\,dx. 
\end{equation}
The variational characterization of the ground state $Q$ implies that for functions below the ground state threshold (i.e. obeying \eqref{below1} and \eqref{below2}), the right-hand side of the identity above is coercive, e.g. bounded below by a constant times the $\dot H^1$ norm. If the weighted momentum were uniformly bounded in time, then integrating the identity above over a sufficiently long time interval would lead to a contradiction (since the $\dot H^1$-norm is uniformly bounded below).  However, this quantity is not uniformly bounded due to the presence of the weight $x$.  The solution is to localize the argument above in space, say to $|x|\leq R$. Then the identity above no longer holds exactly, but instead contains error terms controlled by the following:
\[
\int_{|x|>R} |\nabla u(t,x)|^2 + R^{-2}|u(t,x)|^2 + R^{-b} |u(t,x)|^4\,dx. 
\]
As the orbit of $u$ is pre-compact in $H^1$, these error terms can be made small (say $\leq \eta$) uniformly in time provided $R=R(\eta)$ is chosen sufficiently large; furthermore, the localization of the quantity on the right-hand side of \eqref{virial} is still coercive (uniformly in time).  In particular, one can successfully carry out the scheme described above and derive a contradiction.  Indeed, one arrives at an inequality of the form 
\[
c(u)T\lesssim C(u)R(\eta) + \eta T\qtq{for any}T>0,
\] 
and the contradiction is obtained by choosing $\eta=\eta(u)$ sufficiently small and then $T$ sufficiently large. 

The discussion above shows that once Proposition~\ref{P:uc} is obtained, the proof can be completed.  Thus we turn our attention to the proof of Proposition~\ref{P:uc}.

By the well-posedness theory for \eqref{inls}, initial data obeying \eqref{below2} and with the quantity in \eqref{below1} small enough lead to global scattering solutions.  Thus, if Theorem~\ref{T:FG} (or Theorem~\ref{T}) fails, there is a critical value (denoted $\delta_c$ in \cite{FG}) for $E[u_0]^{s_c} M[u_0]^{1-s_c}$ that obeys $\delta_c<E[Q]^{s_c}M[Q]^{1-s_c}$ and separates the scattering and blowup regions for solutions obeying \eqref{below2}.  To prove Proposition~\ref{P:uc}, the scheme is then the following:
\begin{itemize}
\item[(i)] Construct a sequence of initial conditions $u_{n,0}$ obeying \eqref{below2} and satisfying $M[u_{n,0}]^{1-s_c}E[u_{n,0}]^{s_c}\to \delta_c,$ with corresponding solutions $u_n$ blowing up their space-time norms as $n\to\infty$.
\item[(ii)] Prove that $u_{n,0}$ converges along a subsequence in $H^1$ to a limit $u_{c,0}$.
\item[(iii)] Solve \eqref{inls} with initial data $u_{c,0}$ to obtain $u_c$, and prove the desired properties of $u_c$.
\end{itemize}

The main point is to establish (ii).  Once this is in place, step (iii) is obtained by essentially repeating the arguments of step (ii) and appealing to the small-data and stability results for \eqref{inls}; see e.g. \cite[Proposition~6.5]{FG} for the details.

The approach to establishing (ii) is to expand the sequence $u_{n,0}$ in a linear profile decomposition adapted to the $S(\dot H^{s_c})$ Strichartz estimate.  This means that the $u_{n,0}$ may be written as a linear combination of fixed profiles, translated in space-time, plus a remainder term that becomes small in the Strichartz norm.  Convergence in $H^1$ holds provided there is only one profile (with no space-time translation) and the remainder tends to zero in $H^1$-norm as well. 

The precise result we need is the following proposition.

\begin{proposition}[Linear profile decomposition]\label{P:LPD} 
Let $\{\phi_n\}$ be a bounded sequence in $H^1$.  Then for every $M$, there exist profiles $\{\psi^j\}_{j=1}^M\subset H^1$, time shifts $t_n^j$, translation parameters $x_n^j$, and remainders $W_n^M$ so that (passing to a subsequence in $n$): 
\[
\phi_n = \sum_{j=1}^M e^{-it_n^j\Delta} \psi^j(x-x_n^j) + W_n^M
\]
with the following properties:
\begin{itemize}
\item Orthogonality of parameters: for $j\neq k$, 
\[
|t_n^j-t_n^k|+|x_n^j-x_n^k| \to \infty \qtq{as}n\to\infty.
\]
\item Vanishing of the remainder:
\[
\limsup_{M\to\infty}\limsup_{n\to\infty} \|e^{it\Delta} W_n^M \|_{S(\dot H^{s_c})} = 0.
\]
\item Energy decoupling: for any $M$ and any $s\in[0,1]$,
\[
\|\phi_n\|_{\dot H^{s}}^2 = \sum_{j=1}^M\|\psi^j\|_{\dot H^s}^2 + \|W_n^M\|_{\dot H^s}^2 + o_n(1)\qtq{as}n\to\infty.
\]
\end{itemize}
Finally, we may assume either $t_n^j\equiv 0$ or $t_n^j\to\pm\infty$, and either $x_n^j\equiv 0$ or $|x_n^j|\to\infty$. 
\end{proposition}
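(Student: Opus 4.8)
The plan is to prove Proposition~\ref{P:LPD} by the standard inductive extraction of linear profiles, in the spirit of Keraani and of the many subsequent adaptations, in particular the translation-broken versions appearing in \cite{LMM, KMVZ, KMVZZ2}. The engine of the argument is an \emph{inverse Strichartz inequality}: there is a non-increasing function $\beta:(0,\infty)\to(0,\infty)$ so that, whenever $\{f_n\}\subset H^1$ satisfies $\limsup_n\|f_n\|_{H^1}\le A$ and $\limsup_n\|e^{it\Delta}f_n\|_{S(\dot H^{s_c})}\ge\eps$, there exist, after passing to a subsequence, a profile $\psi\in H^1$, times $t_n\in\R$, and centers $x_n\in\R^3$ such that
\[
e^{it_n\Delta}f_n(\cdot+x_n)\rightharpoonup\psi \quad\text{weakly in }H^1, \qquad \|\psi\|_{\dot H^{s_c}}\gtrsim \eps\,\beta(A/\eps)>0,
\]
together with the decoupling
\[
\lim_n\Bigl(\|f_n\|_{\dot H^s}^2-\|f_n-e^{-it_n\Delta}\psi(\cdot-x_n)\|_{\dot H^s}^2\Bigr)=\|\psi\|_{\dot H^s}^2 \quad\text{for all }s\in[0,1].
\]

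To prove this, I would first invoke a refined Strichartz estimate of B\'egout--Vargas / Carles--Keraani type, bounding a single Strichartz norm $\|e^{it\Delta}f_n\|_{L_t^{q_0}L_x^{r_0}}$ (for a fixed pair $(q_0,r_0)\in\mathcal{A}_{s_c}$) by a fractional power of $\|f_n\|_{\dot H^{s_c}}$ times a fractional power of a weaker, frequency-localized quantity such as $\sup_N N^{\sigma}\|e^{it\Delta}P_Nf_n\|_{L_{t,x}^\infty}$; this in turn follows by interpolating the Strichartz estimate of Section~\ref{S:notation} against a Littlewood--Paley (or bilinear) improvement. The key structural point is that the $H^1$-bound forces $\|P_Nf_n\|_{\dot H^{s_c}}\lesssim A\min(N^{s_c},N^{s_c-1})$, so that the frequencies that matter are comparable to $1$; equivalently, a rescaled bubble with nontrivial $\dot H^{s_c}$-norm would have $H^1$-norm blowing up unless its scale is $O(1)$, which is why \emph{no scaling parameter is needed}. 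From the lower bound $\|e^{it\Delta}f_n\|_{L_t^{q_0}L_x^{r_0}}\gtrsim\eps$ one extracts a dyadic frequency $N_n\sim 1$ and a space-time point $(t_n,x_n)$ at which $|e^{it_n\Delta}P_{N_n}f_n(x_n)|\gtrsim\eps\,\beta(A/\eps)$; passing to a subsequence on which $N_n$ is constant, $e^{it_n\Delta}f_n(\cdot+x_n)\rightharpoonup\psi$ in $H^1$, and pairing against a fixed bump adapted to frequencies $\sim 1$ — using Bernstein together with local smoothing/dispersive bounds to pass to the limit — shows $\psi\neq 0$ with the claimed lower bound. The decoupling is then automatic: with $W_n:=f_n-e^{-it_n\Delta}\psi(\cdot-x_n)$ one has $e^{it_n\Delta}W_n(\cdot+x_n)\rightharpoonup 0$ in $H^1$, so $\langle W_n,e^{-it_n\Delta}\psi(\cdot-x_n)\rangle_{\dot H^s}=\langle e^{it_n\Delta}W_n(\cdot+x_n),\psi\rangle_{\dot H^s}\to 0$ for every $s\in[0,1]$ since $\psi\in H^1\hookrightarrow\dot H^s$.

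Granting the inverse inequality, the proposition follows by iteration. Apply it to $f_n=\phi_n$ to produce $(\psi^1,t_n^1,x_n^1)$ and set $W_n^1:=\phi_n-e^{-it_n^1\Delta}\psi^1(\cdot-x_n^1)$; if $\limsup_n\|e^{it\Delta}W_n^1\|_{S(\dot H^{s_c})}=0$ we stop, otherwise we feed $\{W_n^1\}$ back into the inverse inequality and continue, passing to nested subsequences and diagonalizing. The $s=s_c$ decoupling, summed telescopically, gives $\sum_j\|\psi^j\|_{\dot H^{s_c}}^2\le\limsup_n\|\phi_n\|_{\dot H^{s_c}}^2<\infty$; since $\|\psi^j\|_{\dot H^{s_c}}$ is bounded below in terms of $\eps_{j-1}:=\limsup_n\|e^{it\Delta}W_n^{j-1}\|_{S(\dot H^{s_c})}$, it follows that $\eps_j\to 0$, i.e.\ the remainder vanishes, and summing the two-term decouplings yields the stated energy decoupling for each $M$ and all $s\in[0,1]$. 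Asymptotic orthogonality $|t_n^j-t_n^k|+|x_n^j-x_n^k|\to\infty$ for $j\neq k$ is the usual argument: if it failed along a subsequence, conjugating the symmetry attached to $\psi^k$ through the earlier profiles would show that the weak limit defining $\psi^k$ is already nonzero before the $(k-1)$-th extraction, a contradiction. Finally, after a further subsequence so that each $t_n^j$ converges in $[-\infty,+\infty]$ and each $|x_n^j|$ in $[0,+\infty]$, any finite limit of $t_n^j$ (resp.\ of $x_n^j$) is absorbed into $\psi^j$ using strong $H^1$-continuity of $e^{it\Delta}$ (resp.\ of translations), the resulting $o_n(1)$ errors being added to $W_n^M$; orthogonality is preserved (two profiles cannot both have all-finite parameters without violating it), and one arrives at $t_n^j\equiv 0$ or $t_n^j\to\pm\infty$ and $x_n^j\equiv 0$ or $|x_n^j|\to\infty$, as required.

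The main obstacle is the inverse Strichartz inequality — specifically, establishing the refined Strichartz estimate in this non-endpoint $\dot H^{s_c}$-critical regime and turning the resulting space-time lower bound into a single frequency-localized pointwise bound from which a nonzero weak limit with a quantitative $\dot H^{s_c}$ lower bound can be read off. The remaining steps (the iteration together with the energy, orthogonality, and parameter-normalization bookkeeping) are routine and parallel the treatments in \cite{LMM, KMVZ, KMVZZ2}.
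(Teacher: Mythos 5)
Your proposal is essentially correct, but note that the paper does not prove Proposition~\ref{P:LPD} at all: it is invoked as a known result, with the radial version cited from Proposition~6.1 of \cite{FG} and the non-radial version from \cite{Shao}. What you have written is a reconstruction of the standard Keraani-style argument that underlies exactly those references, so in substance you are reproving the cited result rather than following a different route, and your outline matches the known proof: the $H^1$ bound pins the relevant frequencies to a compact range (so no scaling parameters appear), an inverse Strichartz inequality built on a B\'egout--Vargas/Carles--Keraani-type refinement extracts a profile with translation and time parameters and a quantitative $\dot H^{s_c}$ lower bound, and the iteration, Pythagorean decoupling for $s\in[0,1]$ (via weak $H^1$ convergence, using $\psi\in H^1\subset H^{2s-1}$), asymptotic orthogonality, and normalization of parameters are all handled as you describe. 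Two small points you should tie up if you want a self-contained proof: first, your inverse Strichartz hypothesis is phrased in terms of the full $S(\dot H^{s_c})$ norm while the refined estimate controls a single pair $(q_0,r_0)$, so you need the routine interpolation step showing that smallness in one admissible norm, combined with the uniform Strichartz bounds furnished by the $H^1$ bound, gives smallness of the whole supremum over $\mathcal{A}_{s_c}$ (this is where it matters that the exponent region excludes the endpoints); second, the refined Strichartz estimate itself, which you correctly flag as the main obstacle, is precisely the content of the cited works (\cite{BV, CK, Shao}), so either cite it or prove it rather than leaving it as an announced interpolation. The aside about local smoothing is unnecessary: once you define the weak limit of $e^{it_n\Delta}f_n(\cdot+x_n)$ directly, pairing with the fixed frequency-localized kernel already yields the nontriviality of $\psi$.
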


Similar decompositions now appear in many works, beginning with some fundamental results in \cite{BG, BV, Keraani, CK, MV}. The analogue of Proposition~\ref{P:LPD} for radial sequences appears as Proposition~6.1 in \cite{FG}; a non-radial version can be found in \cite{Shao}, for example.  In the setting of \cite{FG}, the radial assumption implies that the translation parameters $x_n^j$ may be taken to be identically zero.  In fact, this is the only place in their entire paper that they rely directly on the radial assumption!  We return to this point below. 

Applying the linear profile decomposition to the sequence $u_{n,0}$, we are now tasked with proving the following: 
\begin{itemize}
\item[(a)] there is a single profile $\psi$ present, 
\item[(b)] the time shifts $t_n$ obey $t_n\equiv 0$, 
\item[(c)] the translation parameters $x_n$ obey $|x_n|\equiv 0$, and 
\item[(d)] the error $W_n$ coverges to zero strongly in $H^1$. 
\end{itemize}
(Again, we remark that (c) is automatic in \cite{FG} due to the radial assumption.)

Item (a) is proven by contradiction, with the general approach as follows.  Suppose there are multiple profiles $\psi^j$.  Recalling item (i) above and using energy decoupling, we can show that each profile lives below the critical threshold (i.e. $M[\psi^j]^{1-s_c}E[\psi^j]^{s_c}<\delta_c$ and $\psi^j$ obeys \eqref{below2}).  We would then like to associate scattering solutions to \eqref{inls} to each $\psi^j$.

First, if $x_n^j\equiv 0$ and $t_n^j\equiv 0$, we take $v^j$ to be the scattering solution to \eqref{inls} with data $\psi^j$. If instead $x_n^j\equiv 0$ and $t_n^j\to\pm\infty$, we take $v^j$ to be the solution that scatters to $e^{it\Delta}\psi^j$ (cf. \cite[Proposition~5.3]{FG}). In both cases we set $v_n^j(t,x)=v^j(t+t_n^j,x)$. 

In the case of \cite{FG}, this covers all possibilities, as $|x_n^j|\equiv 0$ always holds.  Then one can define the sequence
\begin{equation}\label{NPD}
u_n^M(t)  = \sum_{j=1}^M v_n^j(t) + e^{it\Delta}W_n^M, 
\end{equation}
and immediately observe that $u_n^M$ match $u_{0,n}$ closely in $H^1$ at $t=0$ by construction.  To complete the argument, one shows that due to the orthogonality of the parameters, the functions $u_n^M$ are approximate solutions to \eqref{inls} that obey global space-time bounds.  Using the stability lemma (Lemma~\ref{L:Stab}), this implies that $u_n^M$ and the true solutions $u_n$ are close for all times, and in particular the solutions $u_n$ inherit the good bounds from the $u_n^M$.  As the $u_n$ were constructed to have diverging space-time norms, this yields the desired contradiction and completes the proof of (a).

We can now see precisely what is needed to extend the result of \cite{FG} to the non-radial setting: we need a method to construct scattering solutions to \eqref{inls} corresponding to profiles $\psi^j$ with $|x_n^j|\to\infty$. We cannot simply solve \eqref{inls} with initial data $\psi^j$ and then translate the solution by $x_n^j$, as the inhomogeneity in the nonlinearity breaks the translation invariance of the equation.  It is here that we introduce our new ingredient:

\begin{proposition}[Scattering for data living far from the origin]\label{P} Fix $\phi \in H^1$. Let $\{t_n\}$ be a sequence of times obeying $t_n\equiv 0$ or $t_n\to\pm\infty$ and $\{x_n\}$ a sequence in $\R^3$ satisfying $|x_n|\to\infty$.  Then for all $n$ sufficiently large, there exists a global solution $v_n$ to \eqref{inls} with
\[
v_n(0) = \phi_n := e^{it_n\Delta}\phi (x-x_n)
\] 
that scatters in both time directions and obeys
\[
\| v_n\|_{S(\dot H^{s_c})}+\|v_n\|_{S(L^2)}+\|\nabla v_n\|_{S(L^2)}\lesssim 1,
\]
with implicit constant depending on $\|\phi\|_{H^1}$. 

Furthermore, for $\eps>0$, there exists $N$ and $\psi\in C_c^\infty(\R\times\R^3)$ such that
\[
\| v_n - \psi(\cdot + t_n,\cdot - x_n) \|_{S(\dot H^{s_c})} < \eps\qtq{for}n\geq N. 
\]

\end{proposition}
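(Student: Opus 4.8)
The plan is to construct an approximate solution to \eqref{inls} by solving the \emph{linear} Schr\"odinger equation, and then upgrade to a true solution via the stability lemma (Lemma~\ref{L:Stab}). The heuristic, as explained in the introduction, is that when the data is centered at $x_n$ with $|x_n|\to\infty$, the solution is (approximately) supported far from the origin for a long time, where the weight $|x|^{-b}$ is tiny, so the nonlinearity is negligible.

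Concretely, here is the order of the steps. First, reduce to the case $t_n\equiv 0$: if $t_n\to\pm\infty$, we instead take as our linear object $e^{it\Delta}\phi$ truncated appropriately, exploiting that $\|e^{it\Delta}\phi\|_{S(\dot H^{s_c})}$ is small on $\{|t|\geq T\}$ for $T$ large; the $t_n\equiv 0$ case is the representative one and I describe it. Second, approximate $\phi$ in $H^1$ by a Schwartz (indeed $C_c^\infty$) function, which is legitimate up to an $\eps$-error absorbed by stability; call the approximation $\chi$, so $e^{it\Delta}\chi$ has the smoothness and decay needed to make the error estimates clean. Third, set $\tilde v_n(t,x) = [e^{it\Delta}\chi](x-x_n)$, which solves $i\partial_t \tilde v_n + \Delta\tilde v_n = 0$, so that plugging into \eqref{inls} produces the error term
\[
e_n = -|x|^{-b}|\tilde v_n|^2\tilde v_n = -|x|^{-b}\,\bigl|[e^{it\Delta}\chi](x-x_n)\bigr|^2\,[e^{it\Delta}\chi](x-x_n).
\]
Fourth, and this is the crux, show that $\|e_n\|_{S'(L^2)} + \|\nabla e_n\|_{S'(L^2)} + \|e_n\|_{S'(\dot H^{-s_c})}\to 0$ as $n\to\infty$. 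The point is that on the region $|x|\gtrsim |x_n|^{1/2}$ (say) the factor $|x|^{-b}$ contributes a gain of $|x_n|^{-b/2}\to 0$, while on the complementary region $|x|\lesssim |x_n|^{1/2}$ the translated free evolution $[e^{it\Delta}\chi](x-x_n)$ is evaluated at points of size $\gtrsim |x_n|/2$, where dispersive decay of $e^{it\Delta}\chi$ in $x$ (for $\chi$ Schwartz) forces smallness; here one handles $|x|^{-b}$ via its membership in $L^{3/b,\infty}$ and the Lorentz-space H\"older and Sobolev inequalities recorded in Section~\ref{S:notation}, together with the homogeneous Strichartz bound $\|e^{it\Delta}\chi\|_{S(\dot H^s)}\lesssim\|\chi\|_{\dot H^s}$. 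The derivative $\nabla e_n$ is handled the same way after distributing the gradient, noting $\nabla(|x|^{-b})$ scales like $|x|^{-b-1}$, which only improves the decay in the far region, while the $\dot H^{-s_c}$ piece follows from the Lorentz-Sobolev embedding. Fifth, check the hypotheses of Lemma~\ref{L:Stab}: $\|\tilde v_n\|_{L_t^\infty H_x^1} = \|\chi\|_{H^1}$ is bounded uniformly, $\|\tilde v_n\|_{S(\dot H^{s_c})}=\|e^{it\Delta}\chi\|_{S(\dot H^{s_c})}\lesssim\|\chi\|_{\dot H^{s_c}}$ is bounded, and $\|\phi_n - \tilde v_n(0)\| = \|\phi-\chi\|_{H^1} < \eps$ by the choice of $\chi$. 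Applying the lemma produces, for $n$ large, a global solution $v_n$ with $v_n(0)=\phi_n$, the stated space-time bounds, and $\|v_n - \tilde v_n\|_{S(\dot H^{s_c})}\lesssim\eps$; scattering in both time directions follows from finiteness of $\|v_n\|_{S(\dot H^{s_c})}$ via the standard characterization cited from \cite{FG}.

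For the final assertion, take $\chi\in C_c^\infty(\R^3)$ with $\|\phi-\chi\|_{H^1}$ small enough, and then further truncate $e^{it\Delta}\chi$ in time and space: choose $\psi\in C_c^\infty(\R\times\R^3)$ with $\|e^{it\Delta}\chi - \psi\|_{S(\dot H^{s_c})} < \eps$, which is possible since $e^{it\Delta}\chi$ lies in $S(\dot H^{s_c})$ and $C_c^\infty$ is dense there (the tails in $t$ are small by Strichartz, and one smooths/truncates on the compact part). Then by the triangle inequality and the translation/phase invariance of the $S(\dot H^{s_c})$ norm,
\[
\|v_n - \psi(\cdot + t_n,\cdot - x_n)\|_{S(\dot H^{s_c})} \leq \|v_n - \tilde v_n\|_{S(\dot H^{s_c})} + \|e^{it\Delta}\chi - \psi(\cdot+t_n,\cdot)\|_{S(\dot H^{s_c})}\big|_{\text{shift }x_n} \lesssim \eps,
\]
after adjusting constants and the choice of $\chi,\psi$; in the $t_n\to\pm\infty$ case the shift $\psi(\cdot+t_n,\cdot)$ is accounted for by the reduction in the first step.

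The main obstacle I expect is Step four — verifying that the error $e_n$ and its gradient vanish in the dual Strichartz norms. This requires carefully splitting space into a near region (where $|x|^{-b}$ is large but the translated bump is small by dispersive decay) and a far region (where $|x|^{-b}$ is small), choosing the splitting threshold as an appropriate power of $|x_n|$, and juggling Lorentz-space H\"older with the correct Strichartz pair so that all three pieces $\|e_n\|_{S'(L^2)}$, $\|\nabla e_n\|_{S'(L^2)}$, $\|e_n\|_{S'(\dot H^{-s_c})}$ close with a quantitative power of $|x_n|^{-1}$. A secondary technical point is cleanly handling the $t_n\to\pm\infty$ case, which needs the smallness of the free evolution's Strichartz norm on large-time tails in addition to the spatial-translation gain; but this is essentially the same argument carried out on a half-line.
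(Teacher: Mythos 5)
Your strategy is sound and reaches the same conclusion, but its implementation is genuinely different from the paper's. The paper never smooths the profile: it builds the approximate solution as a spatial cutoff $\chi_n(x-x_n)$ (adapted to the scale $|x_n|$) multiplying the translated free evolution of the frequency-truncated profile $P_{\leq |x_n|^\theta}\phi$ on a window $|t|\leq T$, glued to free evolutions for $|t|>T$, and then passes to the double limit $n\to\infty$, $T\to\infty$. The cutoff makes $|x|^{-b}\lesssim |x_n|^{-b}$ pointwise on the support of the approximation, at the price of commutator errors (derivatives landing on $\chi_n$) that are linear in the solution and grow in $T$ --- which is exactly why the time-gluing and the frequency projection (to handle terms like $\nabla(\nabla\chi_n\cdot\nabla\phi)$ with only $H^1$ data) are needed there. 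You instead replace $\phi$ once and for all by a $C_c^\infty$ approximation $\chi$, absorb the fixed $H^1$ data error into the $\eps$ of Lemma~\ref{L:Stab}, and take the exact translated (and, for $t_n\to\pm\infty$, time-shifted) free evolution as the approximate solution, so the error is purely the nonlinear term and no cutoff, frequency-truncation, or time-splitting bookkeeping arises; the price is that you must prove quantitative pointwise decay of $e^{it\Delta}\chi$ rather than rely on a support property. Both routes close; yours is arguably leaner once the decay estimates are in place, while the paper's works directly with $H^1$ data and only uses soft (dominated/monotone convergence) limiting arguments.

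One refinement is needed in your crux step: on the near region $|x|\lesssim |x_n|^{1/2}$, the assertion that spatial decay of $e^{it\Delta}\chi$ ``forces smallness'' is only valid for $|t|\lesssim c|x_n|$; for larger times the free evolution has genuinely spread to distances $\sim |t|\gtrsim |x_n|$ and is not small in $x$ there. You must therefore also split in time: for $|t|\lesssim c|x_n|$ use rapid decay of $e^{it\Delta}\chi(y)$ for $|y|\gg \langle t\rangle$ (non-stationary phase plus the Schwartz decay of $\widehat{\chi}$), and for $|t|\gtrsim c|x_n|$ use the dispersive bound $\|e^{it\Delta}\chi\|_{L^\infty}\lesssim |t|^{-3/2}\|\chi\|_{L^1}$, whose cube $|t|^{-9/2}$ is integrable on $\{|t|\gtrsim |x_n|\}$ and beats the polynomially growing local norms of $|x|^{-b}$ and $|x|^{-b-1}$ on the ball $\{|x|\lesssim |x_n|^{1/2}\}$. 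With that correction (and noting that in the $t_n\to\pm\infty$ case one may simply take $\tilde v_n(t,x)=[e^{i(t+t_n)\Delta}\chi](x-x_n)$, the error estimates being unchanged by the time shift since $|x|^{-b}$ is time-independent), your argument goes through and yields the proposition, including the $C_c^\infty$ approximation statement by the same density-plus-translation-invariance reasoning the paper uses.
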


With Proposition~\ref{P} in place, we can construct scattering solutions corresponding to profiles with $|x_n^j|\to\infty$, and we can once again construct the `nonlinear profile decomposition' \eqref{NPD}.  The rest of the argument then goes through as described above.  Note that one may need to exploit orthogonality of the $x_n^j$ rather than that of the $t_n^j$ in order to show that the $u_n^M$ are approximate solutions.  In fact, the argument is the same as the one appearing in \cite[Proof of Claim~1, p. 4218]{FG}.  As approximation by functions in $C_c^\infty(\R^{1+3})$ is needed for this step, we have included such a statement in Proposition~\ref{P}.  

Having established item (a) above (i.e. the presence of a single profile), items (b)--(d) follow quickly using either stability theory or Proposition~\ref{P}.  As complete details are provided in \cite{FG}, let us only briefly give the ideas here: (b) If the time shifts diverge, one can use stability theory (comparing $u_n$ to linear solutions) to prove that the solutions $u_n$ would obey uniform space-time bounds.  (c) Similarly, if $|x_n|\to\infty$ then Proposition~\ref{P} and the stability result would imply the same.  (d) Finally, the strong convergence of the remainder to zero follows from the fact that if the remainder captured a nontrivial amount of $H^1$-norm, then $\phi$ would be below the critical threshold and hence the solutions $u_n$ would scatter.

This completes our discussion of the proof of Proposition~\ref{P:uc} in the non-radial setting, and hence concludes the proof of Theorem~\ref{T}.  It only remains to prove Proposition~\ref{P}, which we do in the following section.

We conclude this section with a few general remarks about some related problems.  As we have discussed, in the setting of \eqref{inls}, the presence of the decaying factor $|x|^{-b}$ ultimately precludes the possibility of diverging translation parameters.  For the standard NLS, one really must contend with the possibility that such parameters are present.  In particular, in constructing the minimal blowup solution one finds that the sequence of initial data $u_{0,n}$ only converge in $H^1$ modulo translation.  When constructing the corresponding compact solution $u_c$, one then obtains that the orbit of $u_c$ is pre-compact in $H^1$ modulo translation by some time-dependent spatial center $x(t)$.  The job of the radial assumption is basically to impose $x(t)\equiv 0$, so that the localized virial argument may be applied.  To treat the non-radial NLS, the authors of \cite{DHR} made a further argument utilizing the conservation of momentum to prove that $|x(t)|=o(t)$, which provides enough control over $x(t)$ to close the localized virial argument sketched above.  For models with broken translation symmetry (and so no conserved momentum), results analogous to Proposition~\ref{P} can provide an alternate route to establishing $x(t)\equiv 0$, even in the non-radial setting (see e.g. \cite{KMVZ, LMM, KMVZZ2}). 

%%%%%

\section{Proof of Proposition~\ref{P}}\label{S:P}

We turn to the proof of Proposition~\ref{P}, which we reproduce here:

\begin{prop}  Fix $\phi \in H^1$. Let $\{t_n\}$ be a sequence of times obeying $t_n\equiv 0$ or $t_n\to\pm\infty$ and $\{x_n\}$ a sequence in $\R^3$ satisfying $|x_n|\to\infty$.  Then for all $n$ sufficiently large, there exists a global solution $v_n$ to \eqref{inls} with
\[
v_n(0) = \phi_n := e^{it_n\Delta}\phi (x-x_n)
\] 
that scatters in both time directions and obeys
\[
\| v_n\|_{S(\dot H^{s_c})}+\|v_n\|_{S(L^2)}+\|\nabla v_n\|_{S(L^2)}\lesssim 1,
\]
with implicit constant depending on $\|\phi\|_{H^1}$. 

Furthermore, for $\eps>0$, there exists $N$ and $\psi\in C_c^\infty(\R\times\R^3)$ such that
\[
\| v_n - \psi(\cdot + t_n,\cdot - x_n) \|_{S(\dot H^{s_c})} < \eps\qtq{for}n\geq N. 
\]

\end{prop}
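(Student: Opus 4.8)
The plan is to approximate $v_n$ by the solution of the \emph{linear} Schr\"odinger equation, exploiting the fact that the nonlinearity $|x|^{-b}|v|^2v$ carries the weight $|x|^{-b}$, which is small on the support of data concentrated near $x_n$ with $|x_n|\to\infty$. Concretely, I would first reduce to a compactly supported profile: given $\phi\in H^1$ and $\eps>0$, pick $\psi_0\in C_c^\infty(\R^3)$ with $\|\phi-\psi_0\|_{H^1}<\eps$, and then pick $T>0$ large so that $\|e^{it\Delta}\psi_0\|_{S(\dot H^{s_c};\{|t|>T\})}$ is small (using the Strichartz bound and a density/decay argument; this is where the $t_n\to\pm\infty$ case is handled by absorbing the shift). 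On the bounded time interval $|t|\le T$, the free evolution $e^{it\Delta}\psi_0$ lives (up to a small tail) in a fixed compact set; translating by $x_n$ moves this set out to infinity, so the weight $|x|^{-b}$ evaluated there is $O(|x_n|^{-b})=o_n(1)$.

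The precise construction of the approximate solution: set $\tilde v_n(t,x) = e^{i(t+t_n)\Delta}\psi_0(x-x_n)$ for $|t|\le T$, and for $|t|>T$ let $\tilde v_n$ solve \eqref{inls} with the natural matching data at $t=\pm T$ — or, more simply, glue in the genuine \emph{scattering} solutions coming from the small free data $e^{\pm iT\Delta}e^{it_n\Delta}\psi_0(\cdot-x_n)$, which exist and have small $S(\dot H^{s_c})$ norm by the small-data theory since that data has small $\dot H^{s_c}$ norm (being a spatial translate of $e^{\pm iT\Delta}e^{it_n\Delta}\psi_0$, which is small in $S(\dot H^{s_c})$ on $\{|t|>T\}$ by the choice of $T$). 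In either description the resulting $\tilde v_n$ obeys, uniformly in $n$, the bounds $\|\tilde v_n\|_{L_t^\infty H_x^1} + \|\tilde v_n\|_{S(\dot H^{s_c})}\lesssim 1$, with the constant depending only on $\|\phi\|_{H^1}$.

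Next I would estimate the error $e_n := i\partial_t\tilde v_n + \Delta\tilde v_n + |x|^{-b}|\tilde v_n|^2\tilde v_n$. On $|t|>T$ (where $\tilde v_n$ solves the equation) $e_n=0$; on $|t|\le T$ one has $e_n = |x|^{-b}|\tilde v_n|^2\tilde v_n$, and since $\tilde v_n(t,\cdot)$ is (up to a negligible tail) supported in $x_n + K$ for a fixed compact $K$, we have $\||x|^{-b}\|_{L^{3/b,\infty}(x_n+K)} \lesssim |x_n|^{-b} \to 0$. Combining this with H\"older in Lorentz spaces and the Lorentz–Sobolev embedding collected in Section~\ref{S:notation}, together with the Strichartz bounds on $\tilde v_n$, gives $\|e_n\|_{S'(L^2;\R)} + \|\nabla e_n\|_{S'(L^2;\R)} + \|e_n\|_{S'(\dot H^{-s_c};\R)} \to 0$ as $n\to\infty$. (One must be slightly careful differentiating $|x|^{-b}$ when estimating $\nabla e_n$: the term $\nabla(|x|^{-b}) = -b\,x|x|^{-b-2}$ is even more decaying on $x_n+K$, so it causes no trouble.) Since also $\|v_n(0)-\tilde v_n(0)\|_{H^1} = \|\phi-\psi_0\|_{H^1} < \eps$, the stability result (Lemma~\ref{L:Stab}) applies for $n$ large, producing the genuine global solution $v_n$ with $v_n(0)=\phi_n$, obeying $\|v_n\|_{S(\dot H^{s_c})} + \|v_n\|_{S(L^2)} + \|\nabla v_n\|_{S(L^2)}\lesssim 1$; finiteness of the $S(\dot H^{s_c})$ norm gives scattering in both directions. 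For the final $C_c^\infty$ statement, note $\|v_n - \tilde v_n\|_{S(\dot H^{s_c})}\lesssim_\|\phi\|_{H^1} \eps$ by Lemma~\ref{L:Stab}, and $\tilde v_n$ is, away from the negligible $\{|t|>T\}$ tail, exactly $\psi(\cdot+t_n,\cdot-x_n)$ with $\psi(t,x) = e^{it\Delta}\psi_0(x)$; mollifying and truncating $\psi$ in space-time (again using Strichartz decay) produces a genuine $\psi\in C_c^\infty(\R\times\R^3)$ with the claimed bound, after renaming $\eps$.

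\medskip

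The main obstacle I anticipate is the error estimate for $e_n$ in the three dual norms simultaneously — in particular making the $\dot H^{-s_c}$ dual-Strichartz piece work. Because $s_c\in(\tfrac12,\tfrac34)$ is fractional, one cannot literally distribute a derivative onto $|x|^{-b}$ and the cubic term; instead one uses the Lorentz-space boundedness of $|\nabla|^{-s_c}$ from the preliminaries, writing $|x|^{-b}|\tilde v_n|^2\tilde v_n$ as a product and choosing Lorentz exponents so that the $|x|^{-b}$ factor is measured in $L^{3/b,\infty}$ restricted to $x_n+K$ (giving the crucial $|x_n|^{-b}$ gain) while the remaining factors are controlled by $\|\tilde v_n\|_{L_t^\infty H_x^1}$ and $\|\tilde v_n\|_{S(\dot H^{s_c})}$ via Sobolev and Strichartz — this is precisely the type of nonlinear estimate carried out in \cite{FG}, so the exponent bookkeeping, while the technical heart of the matter, follows a known template. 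A secondary point requiring care is the reduction from $\phi\in H^1$ to $\psi_0\in C_c^\infty$ while retaining the uniform-in-$n$ space-time bounds and the smallness of the initial error; this is handled by the standard argument of first fixing $\eps$, choosing $\psi_0$ and $T$, and only then sending $n\to\infty$.
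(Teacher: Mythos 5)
Your overall strategy is the same as the paper's: approximate $v_n$ by a (suitably prepared) free evolution placed at distance $|x_n|$ from the origin, extract the gain $|x_n|^{-b}$ from the weight in the nonlinearity, and conclude via the stability result (Lemma~\ref{L:Stab}). The implementation differs in an interesting way: you pre-approximate $\phi$ by $\psi_0\in C_c^\infty$, which makes derivatives of the error harmless and removes the need for any cutoffs, whereas the paper keeps $\phi\in H^1$ and instead inserts an explicit spatial cutoff $\chi_n$ (supported where $|x|\gtrsim|x_n|$) together with a Littlewood--Paley truncation $P_{\leq|x_n|^\theta}$, precisely to handle the term $\nabla(\nabla\chi_n\cdot\nabla e^{it\Delta}\phi)$ with only $H^1$ regularity; the paper also keeps the free flow for all times and estimates the long-time error by Strichartz plus monotone convergence rather than gluing nonlinear solutions. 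Two caveats on your version: first, ``lives in a fixed compact set up to a small tail'' is not literally true (there is no finite speed of propagation); the correct substitute is the quantitative weighted bound $\sup_{|s|\leq T}\|\langle x\rangle^N e^{is\Delta}\psi_0\|_{L^2}<\infty$ (e.g. from $x\,e^{is\Delta}=e^{is\Delta}(x+2is\nabla)$), and you must then actually estimate the tail's contribution on the region $|x|\lesssim|x_n|$, where $|x|^{-b}$ is only locally in $L^{3/b,\infty}$; this works, but it is a step, not a remark.

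The genuine gap is the case $t_n\to\pm\infty$. As written, you cut time at $|t|\leq T$ in the frame where the data sits at $t=0$, and you justify the gluing at $t=\pm T$ by saying the matching data $e^{\pm iT\Delta}e^{it_n\Delta}\psi_0(\cdot-x_n)$ ``has small $\dot H^{s_c}$ norm.'' That is false: the free flow and spatial translations are isometries of $\dot H^{s_c}$, so this norm equals $\|\psi_0\|_{\dot H^{s_c}}$, which is not small. The correct small-data criterion is smallness of the \emph{linear evolution} in $S(\dot H^{s_c})$ on the time ray in question, and with your cut this fails as well: if $t_n\to+\infty$, the ray $\{t<-T\}$ contains the concentration time $t=-t_n$, so $\|e^{i(t+t_n)\Delta}\psi_0(\cdot-x_n)\|_{S(\dot H^{s_c};\{t<-T\})}$ tends to the full norm $\|e^{is\Delta}\psi_0\|_{S(\dot H^{s_c};\R)}$, not to zero, and neither the gluing nor the smallness of the nonlinear error is available there. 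The cut must be placed in the profile frame, i.e. on $\{|t+t_n|\leq T\}$ (this is what the paper does, checking the data at time $t_n$); then on $\{|t+t_n|>T\}$ the linear flow is genuinely small in $S(\dot H^{s_c})$ by your choice of $T$, and either your gluing or the paper's simpler device (keep the free flow there and bound the resulting error, which is just the nonlinearity, by Strichartz and monotone convergence) closes the argument. Note that after this repair, when $t_n\to\pm\infty$ the time $t=0$ no longer lies in the free-flow region, so the closeness of $\tilde v_n(0)$ to $\phi_n$ in $H^1$ needs an additional linear-versus-nonlinear comparison on that ray (or, as in the paper, is immediate because the approximation is a free flow there). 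These are fixable points, but as stated the $t_n\to\pm\infty$ case does not go through.
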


\begin{proof} We introduce a sequence of smooth cutoffs $\chi_n$ obeying
\[
\chi_n(x) = \begin{cases} 1 & |x+x_n| >\tfrac12 |x_n|, \\ 0 & |x+x_n|<\tfrac14 |x_n|,\end{cases}
\]
with $\chi_n$ obeying the symbol bounds $|\partial^\alpha \chi_n| \lesssim |x_n|^{-|\alpha|}$ for all multiindices $\alpha$.  In particular, we have $\chi_n\to 1$ pointwise as $n\to\infty$. 

We next define a family of approximations $\tilde v_{n,T}$ parametrized both by $n$ and by times $T>0$.  First, we let
\[
\tilde v_{n,T}(t,x) = \chi_n(x-x_n) e^{it\Delta} P_n \phi(x-x_n)\qtq{for} |t|\leq T,
\]
where we have set
\[
P_n = P_{\leq |x_n|^\theta}\qtq{for some small}0<\theta\ll 1.  
\]
Next, for $|t|>T$ we take the free evolution:
\[
\tilde v_{n,T}(t) = \begin{cases} e^{i(t-T)\Delta}[\tilde v_{n,T}(T)] & t> T, \\
e^{i(t+T)\Delta}[\tilde v_{n,T}(-T)], & t < -T. \end{cases}
\]

Our goal is to prove that (for sufficiently large $n$ and $T$)  the $\tilde v_{n,T}$ are approximate solutions to \eqref{inls} obeying global space-time bounds, with initial data close to $\phi_n$.  Once we have shown this, we can apply the stability result (Lemma~\ref{L:Stab}) to deduce the existence of scattering solutions to \eqref{inls} with initial data $\phi_n$, as desired. 

\begin{remark}\label{Explain} We would like to pause and explain the logic of designing the approximate solutions in this way.  The basic idea is that since the profiles $\phi_n$ are being translated far away from zero, the nonlinear term (containing $|x|^{-b}$) should essentially become negligible, and so we expect that we can approximate a solution to \eqref{inls} by a solution to the linear Schr\"odinger equation.  The role of the cutoff $\chi_n$ is to make this assertion precise (cf. the estimate of \eqref{E1} below).  However, the insertion of a spatial cutoff means that the $\tilde v_{n,T}$ are no longer true solutions to the Schr\"odinger equation.  In particular, when computing the errors (i.e. $(i\partial_t + \Delta) v + |x|^{-b} |v|^2 v$), we will have to contend with error terms that are linear in $v$, which arise when derivatives land on the cutoff function. Because these error terms must be integrated in time, we are ultimately led to bounds that grow with the length of the time interval.  This means that we should only include the cutoff on a finite time interval $[-T,T]$ and look for smallness in the regime $|t|>T$ by other means.  In particular, in the long-time regime we take $\tilde v_{n,T}$ to be a true solution to the linear Schr\"odinger equation, and the smallness as $T\to\infty$ is obtained by Strichartz estimates combined with the monotone convergence theorem.  Finally, the role of the frequency projection arises from the fact that our stability lemma (Lemma~\ref{L:Stab}) demands control over one derivative of the error in space-time norms, leading to error terms of the form $\nabla(\nabla \chi \cdot \nabla \phi)$.  As we only know $\phi \in H^1$, we are therefore forced to truncate $\phi$ in frequency.  As we still need to obtain $\phi$ in the $n\to\infty$ limit, we use a slowly growing frequency cutoff (specifically, to frequencies below $|x_n|^{\theta}$). With this choice, the losses that come from estimating this term via Bernstein's inequality can be overcome using other terms that come with negative powers of $|x_n|$.  %We end this discussion by remarking that if one pursued a more refined stability result (requiring errors to be small in the weakest possible spaces), it might be possible to get by with a somewhat simpler choice of approximate solution. 
\end{remark}

Let us first establish closeness of the initial data:
\begin{equation}\label{ONE}
\lim_{T\to\infty}\limsup_{n\to\infty} \| \tilde v_{n,T}(t_n)-\phi_n\|_{H^1} = 0. 
\end{equation}

\begin{proof}[Proof of \eqref{ONE}] First suppose $t_n\equiv 0$.  Then
\[
\| \tilde v_{n,T}(t_n) - \phi_n \|_{H^1}  = \|\chi_n P_n\phi-\phi\|_{H^1}\to 0 \qtq{as}n\to\infty
\]
by the dominated convergence theorem. Suppose instead $t_n\to\infty$ and fix $T>0$.  Then for $n$ sufficiently large,
\[
\tilde v_{n,T}(t_n) = e^{i(t_n-T)\Delta}\chi_n(x-x_n)e^{iT\Delta}P_n\phi(x-x_n),
\]
and hence
\[
\| \tilde v_{n,T}(t_n) - \phi_n \|_{H^1}  \leq \| P_n\phi - \phi\|_{H^1}  + \| [\chi_n-1] e^{iT\Delta} P_n\phi\|_{H^1}, 
\]
which again tend to zero by dominated convergence.  The case $t_n\to-\infty$ is similar, and hence we complete the proof of \eqref{ONE}.
\end{proof}

We next prove global space-time bounds for the functions $\tilde v_{n,T}$.

\begin{equation}\label{TWO}
\limsup_{T\to\infty}\limsup_{n\to\infty}\bigl\{ \|\tilde v_{n,T}\|_{L_t^\infty H_x^1} + \|\tilde v_{n,T}\|_{S(\dot H^{s_c})} \bigr\} \lesssim 1,
\end{equation}
where all space-time norms are over $\R\times\R^3$.

\begin{proof}[Proof of \eqref{TWO}] Once we have uniform $H^1$ bounds on $[-T,T]$, all of the desired bounds on $\{|t|>T\}$ follow from Strichartz.  Thus, we may restrict our attention to $\{|t|\leq T\}$.  In this range, the desired $L^2$ bounds are immediate, while
\begin{align*}
\| \nabla \tilde v_{n,T}\|_{L_t^\infty L_x^2} & \lesssim \| \nabla (\chi_n)\|_{L_x^3} \| \phi\|_{L_x^6} + \|\chi_n\|_{L_x^\infty} \|\nabla \phi\|_{L_x^2} \lesssim \|\phi\|_{H^1}
\end{align*}
by Sobolev embedding and the properties of $\chi_n$.  Similarly, we can establish $L_t^q L_x^{r}$ bounds for any $(q,r)\in \mathcal{A}_{s_c}$ immediately from Sobolev embedding and Strichartz estimates. This completes the proof of \eqref{TWO}. 
\end{proof} 

Finally, we need to prove that the $\tilde v_{n,T}$ define good approximate solutions to \eqref{inls}.  We define the errors
\[
e_{n,T} = (i\partial_t + \Delta)\tilde v_{n,T} + |x|^{-b} |\tilde v_{n,T}|^2 \tilde v_{n,T},
\]
and we will show:
\begin{equation}\label{THREE}
\lim_{T\to\infty}\limsup_{n\to\infty} \bigl\{\|  e_{n,T} \|_{S'(L^2)} + \|\nabla  e_{n,T} \|_{S'(L^2)} + \|  e_{n,T} \|_{S'(\dot H^{-s_c})} \bigr\} = 0.
\end{equation}

\begin{proof}[Proof of \eqref{THREE}]  We first consider the region $t>T$, with the region $t<-T$ being treated by similar arguments. In this region
\[
e_{n,T} = |x|^{-b} |\tilde v_{n,T}|^2 \tilde v_{n,T}. 
\]
We will show that
\begin{align*}
\lim_{T\to\infty}\limsup_{n\to\infty}\bigl\{ & \|e_{n,T}\|_{L_t^{1} L_x^{2}(\{t>T\})} + \|\nabla e_{n,T}\|_{L_t^{1} L_x^{2}(\{t>T\})} \\
& \quad+ \|e_{n,T}\|_{L_t^{\frac{4}{3+b}-} L_x^{\frac{6}{5}+}(\{t>T\})}\bigr\} = 0.
\end{align*}
The last norm corresponds essentially to the endpoint of the admissible region for the dual Strichartz norm (which appears in the stability result, Lemma~\ref{L:Stab}).  In most instances below, we will simply estimate the endpoint, since the arguments we give always allow the spaces to be perturbed slightly.  It is only in the estimation of \eqref{E3} below that we need to avoid the exact endpoint. 

We begin by using H\"older's inequality, Sobolev embedding, and Strichartz to estimate
\begin{align*}
\| |x|^{-b} |\tilde v_{n,T}|^2 \tilde v_{n,T} \|_{L_t^1 L_x^2(\{t>T\})} & \lesssim \| |x|^{-b}\|_{L_x^{\frac{3}{b},\infty}} \| \tilde v_{n,T}\|_{L_t^3 L_x^{\frac{18}{3-2b},6}(\{t>T\})}^3 \\
& \lesssim \| |\nabla|^{\frac{1+b}{3}} \tilde v_{n,T} \|_{L_t^3 L_x^{\frac{18}{5},6}(\{t>T\})}^3 \\
%& \lesssim\| |\nabla|^{\frac{1+b}{3}} \tilde v_{n,T} \|_{L_t^3 L_x^{\frac{18}{5}}(\{t>T\})}^3\\
&\lesssim \| |\nabla|^{\frac{1+b}{3}} e^{it\Delta}[\tilde v_{n,T}(T)]\|_{L_t^3 L_x^{\frac{18}{5}}(\{t>0\})}^3.
\end{align*}
Now we recall the definition of $\tilde v_{n,T}(T)$ and estimate the final norm as follows:
\begin{align*}
\| |\nabla|^{\frac{1+b}{3}}e^{it\Delta} \chi_n P_n e^{iT\Delta}\phi \|_{L_t^3 L_x^{\frac{18}{5}}(\{t>0\})} & \lesssim \| (\chi_n-1) P_n \phi \|_{\dot H_x^{\frac{1+b}{3}}} \\
& \quad + \| |\nabla|^{\frac{1+b}{3}}e^{it\Delta} \phi \|_{L_t^3 L_x^{\frac{18}{5}}(\{t>T\})}.
\end{align*}
The first term above tends to zero as $n\to\infty$ by dominated convergence.  The second term is bounded by $\phi$ in $\dot H^{\frac{1+b}{3}}$ and hence the norm tends to zero as $T\to\infty$ by monotone convergence.

Next, we consider the term in \eqref{THREE} with the derivative.  This leads to two terms, one of the form $|x|^{-b}\mathcal{O}( v^2 \nabla v)$ and one of the form $|x|^{-b} \mathcal{O}( v^2 |x|^{-1} v)$.  By Hardy's inequality, we can treat these terms identically, provided we work in a space below $L_x^3$ for $|x|^{-1} v$ and $\nabla v$.  In particular, choosing $4<q<\tfrac{2}{b}$, we use Sobolev embedding and Strichartz to estimate (on the region $\{t>T\}$):
\begin{align*}
\| \nabla [|x|^{-b} |\tilde v_{n,T}|^2 \tilde v_{n,T}] \|_{L_t^1 L_x^2} & \lesssim \| |x|^{-b} \|_{L_x^{\frac{3}{b},\infty}} \|\tilde v_{n,T}\|_{L_t^{\frac{2q}{q-1}} L_x^{\frac{6q}{2-bq},3q}}^2 \| \nabla \tilde v_{n,T} \|_{L_t^{q} L_x^{\frac{6q}{3q-4}}} \\
& \lesssim \| |\nabla|^{\frac{1+b}{2}} \tilde v_{n,T} \|_{L_t^{\frac{2q}{q-1}} L_x^{\frac{6q}{q+2},3q}}^2 \| \tilde v_{n,T}(T)\|_{H_x^1} \\
& \lesssim \| |\nabla|^{\frac{1+b}{2}} e^{it\Delta}[\tilde v_{n,T}(T)] \|_{L_t^{\frac{2q}{q-1}} L_x^{\frac{6q}{q+2}}(\{t>0\})}^2
\end{align*}
uniformly in $n,T$.  Noting that $\tfrac{1+b}{2}<1$, we find that we are in the same position as above, and so we may estimate as before to conclude that this term tends to zero as $n,T\to\infty$.  

We now consider the final norm over the region $\{t>T\}$. Using Sobolev embedding, we get
\begin{align*}
\| |x|^{-b} |\tilde v_{n,T}|^2 \tilde v_{n,T}\|_{L_t^{\frac{4}{3+b}} L_x^{\frac65}} & \lesssim \| |x|^{-b} \|_{L_x^{\frac{3}{b},\infty}} \| \tilde v_{n,T}\|_{L_t^{\frac{12}{3+b}} L_x^{\frac{18}{5-2b},\frac{18}{5}}}^3 \\
& \lesssim \| |\nabla|^{\frac{1+b}{6}}\tilde v_{n,T}\|_{L_t^{\frac{12}{3+b}} L_x^{\frac{18}{6-b},\frac{18}{5}}}^3 \\
& \lesssim  \| |\nabla|^{\frac{1+b}{6}} e^{it\Delta}[\tilde v_{n,T}(T)]\|_{L_t^{\frac{12}{3+b}} L_x^{\frac{18}{6-b}}(\{t>0\})}^3.
\end{align*}
Once again, we are in a similar situation to the ones encountered above, and so the same analysis suffices to show that this term tends to zero as $n,T\to\infty$. 

It remains to consider the region $|t|\leq T$ in \eqref{THREE}.   In this region we can compute
\begin{align}
e_{n,T}(t,x) & = |x|^{-b}\chi_n^3(x-x_n) |e^{it\Delta}P_n\phi(x-x_n)|^2 e^{it\Delta}P_n\phi(x-x_n) \label{E1} \\
& + \Delta[\chi_n(x-x_n)] e^{it\Delta} P_n\phi(x-x_n) \label{E2} \\
& + 2\nabla[\chi_n(x-x_n)]\cdot \nabla e^{it\Delta} P_n\phi(x-x_n). \label{E3}
\end{align}
We will show that for fixed $T$, each of these terms tends to zero as $n\to\infty$. 

Let us first consider the contribution of \eqref{E1}.  On the support of this term, we have the pointwise estimate $|x|^{-b} \lesssim |x_n|^{-b}$.  Thus we may bound
\begin{align*}
\| \eqref{E1}\|_{L_t^1 L_x^2} & \lesssim |x_n|^{-b} \| e^{it\Delta} P_n\phi \|_{L_t^3 L_x^6}^3 \\
& \lesssim |x_n|^{-b} \| |\nabla|^{\frac13} e^{it\Delta} \phi\|_{L_t^3 L_x^{\frac{18}{5}}}^3 \\
& \lesssim |x_n|^{-b} \|\phi\|_{H^1}^3 \to 0 \qtq{as}n\to\infty. 
\end{align*}
We next consider the derivative of this quantity.  If the derivative lands on $|x|^{-b}$ or on the cutoff, we can estimate exactly as above, attaining the bound $|x_n|^{-b-1}$ instead of $|x_n|^{-b}$.  If instead the derivative lands on a copy of the free solution, we could either rearrange the spaces slightly, or we can recall that $P_n \phi$ is frequency localized to frequencies $\leq |x_n|^\theta$.  Thus the extra derivative would ultimately contribute $|x_n|^\theta$ to the estimate above (via Bernstein's inequality), which is acceptable provided we choose $\theta<b.$  Finally, we can estimate the remaining space-time norm via
\begin{align*}
\|\eqref{E1}\|_{L_t^{\frac{4}{3+b}}L_x^{\frac65}} & \lesssim |x_n|^{-b} \| e^{it\Delta}P_n \phi \|_{L_t^{\frac{12}{3+b}}L_x^{\frac{18}{5}}}^3 \\
& \lesssim |x_n|^{-b} \| |\nabla|^{\frac{1-b}{6}} e^{it\Delta} P_n \phi \|_{L_t^{\frac{12}{3+b}} L_x^{\frac{18}{6-b}}}^3 \\
& \lesssim |x_n|^{-b}\|\phi\|_{H^1}^3 \to 0 \qtq{as}n\to\infty.  
\end{align*}

We turn to \eqref{E2} and \eqref{E3}.  The $L_t^1 L_x^2$-norm is estimated by 
\begin{align*}
T\{ |x_n|^{-2} + |x_n|^{-1}\}\|\phi\|_{H^1},
\end{align*}
which is acceptable.  For the $L_t^1 L_x^2$-norm of the derivative, we are led instead to
\begin{align*}
T\{ |x_n|^{-3} & + |x_n|^{-2} + |x_n|^{-1} \}\bigl\{ \| \phi_n\|_{L^2}+\|\nabla\phi_n\|_{L^2}+\|\Delta \phi_n\|_{L^2}\bigr\} \\
& \lesssim  T|x_n|^{-1+\theta}\|\phi\|_{H^1}^3 \to 0 \qtq{as}n\to\infty. 
\end{align*}
Finally, we consider the $L_t^{\frac{4}{3+b}}L_x^{\frac65}$-norm.  For \eqref{E2}, we use H\"older's inequality to estimate
\[
T^{\frac{3+b}{4}}\| \Delta \chi_n\|_{L^3}\|\phi\|_{L^2} \lesssim T^{\frac{3+b}{4}}|x_n|^{-1}\|\phi\|_{L^2}\to 0 \qtq{as}n\to\infty. 
\]
For \eqref{E3}, we instead have
\begin{align*}
\|\nabla\chi_n\cdot \nabla e^{it\Delta}P_n\phi\|_{L_t^{\frac{4}{3+b}-}L_x^{\frac65+}}&  \lesssim T^{\frac{3+b}{4}+} \| \nabla \chi_n\|_{L_x^{3+}} \|\nabla \phi\|_{L^2}\\
&  \lesssim T^{\frac{3+b}{4}+}|x_n|^{0-}\|\phi\|_{H^1} \to 0 \qtq{as}n\to\infty.
\end{align*}
This completes the proof of \eqref{THREE} in the regime $|t|\leq T$.  \end{proof}

Having established \eqref{ONE}, \eqref{TWO}, and \eqref{THREE}, we can now appeal to the stability result, Lemma~\ref{L:Stab} to deduce the existence of a global solution $v_n$ to \eqref{inls} satisfying $v_n(0)=\phi_n$ and obeying 
\[
\|v_n\|_{S(\dot H^{s_c})} + \|v_n\|_{S(L^2)} + \|\nabla v_n\|_{S(L^2)}\lesssim 1
\]
for all $n$ sufficiently large.

It remains to establish the approximation by $C_c^\infty$ functions. We first observe that the construction above yields
\[
\lim_{T\to\infty} \limsup_{n\to\infty} \| v_n(\cdot-t_n)-\tilde v_{n,T}(\cdot)\|_{S(\dot H^{s_c})} = 0. 
\]
Given $\eps>0$, we choose $\psi\in C_c^\infty(\R^{1+3})$ so that
\[
\|e^{it\Delta} \phi - \psi\|_{S(\dot H^{s_c})} <\eps,
\]
which then reduces the problem to proving
\[
\| \tilde v_{n,T}(t,x) - e^{it\Delta}\phi(x-x_n)\|_{S(\dot H^{s_c})}<\eps
\]
for $n,T$ large. We consider the region $\{|t|\leq T\}$ and $\{|t|>T\}$ separately.  First, on $\{|t|\leq T\}$ we estimate
\begin{align*}
\|\tilde v_{n,T}(t,x)- e^{it\Delta}\phi(x-x_n)\|_{S(\dot H^{s_c})} & \lesssim \|[\chi_n-1]e^{it\Delta}\phi\|_{S(\dot H^{s_c})} + \|P_n\phi-\phi\|_{H^1} \\
& =o(1) \qtq{as}n\to\infty
\end{align*}
by dominated convergence.  For $t>T$, say, we need to estimate
\begin{align*}
\|e^{it\Delta}e^{-iT\Delta}\chi_n e^{iT\Delta}P_n\phi - e^{it\Delta}\phi\|_{S(\dot H^{s_c};(T,\infty))}.
\end{align*}
In fact, applying the triangle inequality and using monotone convergence, the problem is reduced to proving
\[
\lim_{T\to\infty}\limsup_{n\to\infty}\| e^{it\Delta}\{\chi_n e^{iT\Delta} P_n\phi\}\|_{S(\dot H^{s_c};(0,\infty))}=0. 
\]
To this end, we note that the norm above can be bounded by
\begin{align*}
\|[\chi_n-1] e^{iT\Delta} P_n \phi\|_{\dot H^{s_c}} + \|e^{it\Delta}  \phi \|_{S(\dot H^{s_c};(T,\infty))} + \|P_n\phi - \phi\|_{\dot H^{s_c}}.  
\end{align*}
Then we can see that the first and third terms tend to zero as $n\to\infty$ (by dominated convergence), while the second term can be shown to tend to zero by Strichartz and monotone convergence.  This completes the proof. \end{proof}

\end{document}